\newtheorem{thm}{Theorem}[section]
\newtheorem{prop}[thm]{Proposition}
\newtheorem{lem}[thm]{Lemma}
\newtheorem{cor}[thm]{Corollary}
\theoremstyle{definition}
\newtheorem{definition}[thm]{Definition}
\newtheorem{notation}[thm]{Notation}
\newtheorem{prob}[thm]{Problem}
\theoremstyle{remark}
\newtheorem{rem}[thm]{Remark}
\DeclareMathOperator*{\ModR}{Mod-R}
\DeclareMathOperator*{\SubLim}{SubLim}
\DeclareMathOperator*{\modR}{mod-R}
\DeclareMathOperator*{\ext}{Ext}
\DeclareMathOperator*{\tor}{Tor}
\DeclareMathOperator*{\Hom}{Hom}
\DeclareMathOperator*{\Ker}{Ker}
\DeclareMathOperator*{\spec}{Spec}
\DeclareMathOperator{\gen}{Gen}
\DeclareMathOperator*{\Div}{-Div}
\DeclareMathOperator*{\ann}{Ann}
\DeclareMathOperator*{\ass}{Ass}
\DeclareMathOperator*{\Span}{Span}
\DeclareMathOperator*{\vass}{VAss}
\DeclareMathOperator*{\pd}{pd}
\DeclareMathOperator*{\id}{id}
\DeclareMathOperator*{\tr}{Tr}
\DeclareMathOperator*{\ctr}{tr}
\DeclareMathOperator*{\supp}{Supp}
\DeclareMathOperator*{\steq}{\stackrel{st}{\simeq}}
\begin{document}

\title{One-tilting classes and modules over commutative rings}
\author{Michal Hrbek}
\address{Charles University \\
          Faculty of Mathematics and Physics\\
          Department of Algebra \\
          Sokolovsk\'a 83\\
          186 75 Praha 8\\
          Czech Republic}
		  \email{hrbmich@gmail.com}
 
\thanks{The author is partially supported by the Grant Agency of the Czech Republic under the grant no. 14-15479S and by the project SVV-2015-260227 of the Charles University in Prague.} 
\keywords{Commutative ring, tilting module, cotilting module, Zariski spectrum, Gabriel topology}
\subjclass[2010]{Primary: 13C05, 13D30, 16D90. Secondary: 16E30, 13D07, 13B30}

\date{\today}

\begin{abstract}
		We classify 1-tilting classes over an arbitrary commutative ring. As a consequence, we classify all resolving subcategories of finitely presented modules of projective dimension at most 1. Both these collections are in 1-1 correspondence with faithful Gabriel topologies of finite type, or equivalently, with Thomason subsets of the spectrum avoiding a set of primes associated in a specific way to the ring. We also provide a generalization of the classical Fuchs and Salce tilting modules, and classify the equivalence classes of all 1-tilting modules. Finally we characterize the cases when tilting modules arise from perfect localizations.
\end{abstract}

\maketitle
\section{Introduction}
The classification of tilting classes and modules was done gradually, starting with abelian groups (\cite{GT1}), then small Dedekind domains, first assuming V=L (\cite{TW},\cite{TWC}), and then in ZFC (\cite{BET}), for Prüfer domains (\cite{BTC}), and almost perfect domains (\cite{APD}). Recently, in \cite{CN} the authors classified tilting classes of a commutative noetherian ring in terms of finite sequences of subsets of the Zariski spectrum of $R$. In particular, they proved that $1$-tilting classes correspond bijectively to specialization closed subsets of $\spec(R)$ that do not contain associated primes of $R$. We generalize this result to arbitrary commutative rings by showing that there is a one-to-one correspondence between 1-tilting classes and Thomason subsets of $\spec(R)$ that avoid primes ``associated'' to $R$ in certain sense. Thomason subsets of the spectrum coincide with specialization closed subsets in the noetherian case, and seem to be the correct generalization in various classification theorems. The prime example of this phenomenon is the classification of compactly generated localizing subcategories of the unbounded derived category of $R$ done first by Neeman for noetherian rings and then in general by Thomason (\cite{TH}). 

As in the noetherian case in (\cite{CN}), we start working in the dual setting of cotilting classes. Even though there is an explicit duality between tilting modules and cotilting modules of cofinite type, the one way nature of the duality makes the tilting side harder to approach. For example, cotilting modules over commutative noetherian case are described in \cite{CNC}, but tilting modules were described only for special classes of noetherian rings. The crucial step in our approach is to show that a $1$-cotilting class is of cofinite type if and only if it is closed under injective envelopes (Corollary~\ref{C30}).

Alternatively, 1-tilting classes over a commutative ring $R$ correspond bijectively to faithful finitely generated Gabriel topologies over $R$. From this point of view, our classification generalizes directly results for Prüfer domains from \cite{BTC}. If $R$ is not semihereditary, one has to replace the cyclic generators of the hereditary torsion class by their Auslander-Bridger transposes in order to describe the resolving subcategories of finitely presented modules of projective dimension at most $1$. In the second part of the paper, we use this idea and construct an associated tilting module for each 1-tilting class over a commutative ring. This construction generalizes the Fuchs and Salce tilting modules introduced by Facchini, Fuchs-Salce, and Salce (\cite{F}, \cite{FS}, \cite{S}) from multiplicative sets over a domain and finitely generated Gabriel topology over a Prüfer domain to general faithful finitely generated Gabriel topology over a commutative ring.

In the rest of the second section we use the ``minimality'' of the constructed 1-tilting modules and provide an elementary proof of the commutative version of the recently solved Saorín's problem (\cite{PP}). Finally, in the last section we show that a 1-tilting module arises from a perfect localization if and only if the associated Gabriel topology is perfect and the induced perfect localization has projective dimension 1.
\section{Preliminaries}
\subsection{Basic notation and cotorsion pairs}
Given an (associative, unital) ring $R$, we denote by $\ModR$ the category of all right $R$-modules and by $\modR$ the full subcategory of $\ModR$ consisting of all finitely presented right $R$-modules.

	For a class of right $R$-modules $\mathcal{S}$, we will use the following notation:
	$$\textstyle\mathcal{S}^\perp=\{M \in \ModR \mid \ext_R^1(S,M)=0 \text{ for all $S \in \mathcal{S}$}\},$$
	$$\textstyle^\perp\mathcal{S}=\{M \in \ModR \mid \ext_R^1(M,S)=0 \text{ for all $S \in \mathcal{S}$}\}.$$

	Similarly, if $\mathcal{S}$ is a class of left $R$-modules we let:
	$$\textstyle\mathcal{S}^\intercal=\{M \in \ModR \mid \tor_R^1(M,S)=0 \text{ for all $S \in \mathcal{S}$}\}.$$

	Given a class $\mathcal{S}$, a chain of submodules of an $R$-module $M$ 
	$$0=M_0 \subseteq M_1 \subseteq \cdots \subseteq M_\alpha \subseteq M_{\alpha+1} \subseteq \cdots \subseteq M_\lambda=M$$
	indexed by ordinal $\lambda+1$, with the property that $M_\beta=\bigcup_{\alpha<\beta}M_\alpha$ for each limit ordinal $\beta \leq \lambda$ and $M_{\alpha+1}/M_\alpha$ is isomorphic to some module from $\mathcal{S}$ for each $\alpha < \lambda$, is called an $\mathcal{S}$\emph{-filtration} of $M$. We say that $M$ is $\mathcal{S}$\emph{-filtered} if it possesses an $\mathcal{S}$-filtration.

	A couple of full subcategories $(\mathcal{A},\mathcal{B})$ of $\ModR$ is called a \emph{cotorsion pair} provided that $\mathcal{A}={}^{\perp}\mathcal{B}$ and $\mathcal{B}=\mathcal{A}^{\perp}$. Given a class $\mathcal{S}$ of modules, the cotorsion pair $(^{\perp}(\mathcal{S}^{\perp}),\mathcal{S}^{\perp})$ is \emph{generated} by $\mathcal{S}$. The following important result about cotorsion pairs generated by sets of modules will be used freely throughout the paper.
	\begin{lem}
			(\cite[Corollary 6.14]{GT}) Let $\mathcal{S}$ be a set of modules and $(\mathcal{A},\mathcal{B})$ the cotorsion pair generated by $\mathcal{S}$. Then $\mathcal{A}$ consists precisely of all direct summands of all $\mathcal{S}$-filtered modules.
	\end{lem}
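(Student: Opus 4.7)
The plan is to prove both containments. For the inclusion $\supseteq$, the key tool is Eklof's lemma: given an $\mathcal{S}$-filtration $0 = M_0 \subseteq M_1 \subseteq \cdots \subseteq M_\lambda = M$ and any $B \in \mathcal{B} = \mathcal{S}^\perp$, I would show $\ext_R^1(M, B) = 0$ by transfinite induction on $\lambda$. The successor step uses the long exact Ext sequence for $0 \to M_\alpha \to M_{\alpha+1} \to M_{\alpha+1}/M_\alpha \to 0$ together with $M_{\alpha+1}/M_\alpha \in \mathcal{S}$; the limit step relies on an inverse-limit / Mittag-Leffler argument to transfer vanishing through the continuous union. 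Since $\mathcal{A} = {}^\perp \mathcal{B}$ is trivially closed under direct summands by additivity of $\ext_R^1$ in the first variable, any direct summand of an $\mathcal{S}$-filtered module lies in $\mathcal{A}$.

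For the reverse inclusion, fix $M \in \mathcal{A}$. The strategy is to produce a short exact sequence
\begin{equation*}
0 \to K \to N \to M \to 0
\end{equation*}
with $N$ an $\mathcal{S}$-filtered module (after absorbing $R$ into $\mathcal{S}$, which is harmless because $R$ is projective and so lies in $\mathcal{A}$, leaving both $\mathcal{A}$ and $\mathcal{B}$ unchanged) and $K \in \mathcal{B}$. Once such a sequence exists, $\ext_R^1(M, K) = 0$ forces it to split, exhibiting $M$ as a direct summand of $N$, as required.

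The construction is the Eklof--Trlifaj small-object argument. Start with a free presentation $0 \to K_0 \to F_0 \to M \to 0$ and build a continuous ascending chain $F_0 \subseteq F_1 \subseteq \cdots$ with compatible kernels $K_0 \subseteq K_1 \subseteq \cdots$ and $F_\alpha / K_\alpha \cong M$ throughout. At each successor stage, define $F_{\alpha+1}$ as the pushout of $F_\alpha \hookleftarrow K_\alpha \to E_\alpha$, where $E_\alpha$ is built from the direct sum over all pairs $(S, \xi)$ with $S \in \mathcal{S}$ and $\xi \in \ext_R^1(S, K_\alpha)$ of the middle terms of extensions representing $\xi$. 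By construction, $F_{\alpha+1}/F_\alpha$ is a direct sum of modules from $\mathcal{S}$, and each such extension class is killed by the pushout. Take unions at limit ordinals, fix a regular cardinal $\kappa$ whose cofinality exceeds $|S|$ for every $S \in \mathcal{S}$, and set $N = F_\kappa$, $K = K_\kappa$.

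The main obstacle is verifying $K \in \mathcal{B}$, i.e.\ that $\ext_R^1(S, K) = 0$ for each $S \in \mathcal{S}$. This is the set-theoretic heart of the argument: given an arbitrary extension $0 \to K \to E \to S \to 0$, the smallness of $S$ relative to the cofinality of $\kappa$ lets one represent the extension class by a cocycle landing already in $K_\alpha$ for some $\alpha < \kappa$, so the class lies in the image of $\ext_R^1(S, K_\alpha) \to \ext_R^1(S, K)$; but every such class was deliberately annihilated at stage $\alpha+1$ by inclusion into the pushout. This cofinality bookkeeping, combined with the additivity needed to assemble the single pushout $E_\alpha$ at each stage, is the principal technical hurdle; the remainder of the argument is formal.
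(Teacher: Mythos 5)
Your argument is correct and is essentially the standard Eklof--Trlifaj proof behind the cited \cite[Corollary 6.14]{GT} (the paper itself gives no proof, only the citation): Eklof's lemma puts all $\mathcal{S}$-filtered modules into $\mathcal{A}$, and the small object argument yields $0 \to K \to N \to M \to 0$ with $K \in \mathcal{B}$ and $N$ filtered by $\mathcal{S} \cup \{R\}$ --- your absorption of $R$ into $\mathcal{S}$ is exactly the right reading of the statement --- so the sequence splits whenever $M \in \mathcal{A}$. The only loose point is describing the limit step of Eklof's lemma as a Mittag-Leffler/inverse-limit argument: at a limit ordinal one instead extends a compatible family of partial splittings (the obstruction at each successor lying in $\ext^1_R(M_{\alpha+1}/M_\alpha,B)=0$), since the relevant derived-inverse-limit vanishing is not automatic at uncountable cofinalities; this is a matter of wording and does not affect the structure of your proof.
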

\subsection{Gabriel topologies, torsion pairs, and divisibility}
Given a right ideal $I$ and an element $t$ of a ring $R$, we denote $(I : t)=\{r \in R \mid tr \in I\}$.
\begin{definition}
	A filter $\mathcal{G}$ of right ideals of $R$ is called a \emph{Gabriel topology} provided that:
	\begin{itemize}
			\item if $I \in \mathcal{G}$ and $t \in R$, then $(I : t) \in \mathcal{G}$,
			\item if $J$ is a right ideal and $I \in \mathcal{G}$ is such that $(J : t) \in \mathcal{G}$ for any $t \in I$, then $J \in \mathcal{G}$.
	\end{itemize}
	A Gabriel topology is \emph{finitely generated} if it has a basis of finitely generated right ideals. A right ideal $I$ of $R$ is \emph{faithful} if $\ann I=0$ (if $R$ is commutative, this is equivalent to $\Hom_R(R/I,R)=0$). We say that a Gabriel topology is \emph{faithful} if it has a basis consisting of faithful ideals (and thus all ideals in $\mathcal{G}$ are faithful).
\end{definition}
	There is an easier description of finitely generated Gabriel topologies over commutative rings.
	\begin{lem}
			\label{L233}
		Suppose $R$ is commutative. A filter $\mathcal{G}$ of ideals of $R$ with a basis of finitely generated ideals is a (finitely generated) Gabriel topology iff it is closed under ideal products.
	\end{lem}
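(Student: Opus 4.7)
The plan is to verify the two Gabriel topology axioms beyond the filter property, namely $(I:t)\in\mathcal{G}$ and the transitivity axiom, from the hypothesis of closure under products (and conversely). A useful preliminary observation is that in the commutative setting the axiom $(I:t)\in\mathcal{G}$ is automatic for any filter of ideals, since $I\subseteq(I:t)$ (if $r\in I$ then $tr\in I$, as $I$ is an ideal). So the entire content of the lemma concerns the transitivity axiom.

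For the ``only if'' direction, suppose $\mathcal{G}$ is a Gabriel topology and $I,J\in\mathcal{G}$. For any $t\in I$ one has $tJ\subseteq IJ$, whence $J\subseteq(IJ:t)$, and by the filter property $(IJ:t)\in\mathcal{G}$. Applying the transitivity axiom with the ideal $I$ (to the target ideal $IJ$) yields $IJ\in\mathcal{G}$. Note that this direction does not use the finitely generated basis at all.

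For the ``if'' direction, I would derive the transitivity axiom from closure under products, and this is the place where the finitely generated basis becomes essential. Given an ideal $J$ and $I\in\mathcal{G}$ with $(J:t)\in\mathcal{G}$ for every $t\in I$, first shrink $I$ to a finitely generated $I'=(t_1,\dots,t_n)\in\mathcal{G}$ with $I'\subseteq I$, which exists by the finitely generated basis hypothesis. Then $(J:t_i)\in\mathcal{G}$ for each $i$, so the finite intersection $K=\bigcap_{i=1}^n(J:t_i)$ still lies in $\mathcal{G}$. Now $I'K\subseteq J$ because $t_iK\subseteq J$ for every $i$, and $I'K\in\mathcal{G}$ by closure under products, so the filter property forces $J\in\mathcal{G}$.

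The main (and really the only) obstacle is the need to replace the a priori infinite family $\{(J:t)\mid t\in I\}$ by a finite subfamily in order to form a single ``witness'' ideal $K\in\mathcal{G}$; the finitely generated basis hypothesis is precisely what legitimizes this reduction. Everything else is a direct manipulation of the filter axioms and the colon/product relations in a commutative ring.
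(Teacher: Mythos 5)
Your proof is correct and takes essentially the same route as the paper: the axiom $(I:t)\in\mathcal{G}$ is automatic from $I\subseteq(I:t)$, product closure follows from the transitivity axiom applied via $J\subseteq(IJ:t)$, and the converse reduces to a finitely generated $I'\subseteq I$ supplied by the basis hypothesis. The only cosmetic difference is that you form the witness ideal as the intersection $\bigcap_{i}(J:t_i)$ and multiply once by $I'$, whereas the paper multiplies $I$ with all the colon ideals $(J:i_k)$; both products lie in $\mathcal{G}$ and are contained in $J$, so the arguments are interchangeable.
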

	\begin{proof}
			If $\mathcal{G}$ is a Gabriel topology, then it is closed under products, since for any $i \in I$ we have $(IJ : i) \supseteq J$, and thus $IJ \in \mathcal{G}$, provided that $I,J \in \mathcal{G}$. Suppose that $\mathcal{G}$ is closed under products. Let $I \in \mathcal{G}$ and $t \in R$. Since $R$ is commutative, $I \subseteq (I : t)$ and thus the latter ideal is in $\mathcal{G}$. Let now $J$ be any ideal and $I \in \mathcal{G}$ such that $(J : t) \in \mathcal{G}$ for each $t \in I$. We want to show that $J \in \mathcal{G}$. By the hypothesis, we can assume that $I$ is finitely generated, say with a generating set $\{i_1,i_2,\ldots,i_n\}$. We have $I_k=(J : i_k) \in \mathcal{G}$ for all $k=1,2,\ldots,n$. It follows that $II_1I_2\cdots I_k \subseteq J$, and thus $J \in \mathcal{G}$, as claimed.
	\end{proof}

	We say that a pair of full subcategories $(\mathcal{T},\mathcal{F})$ of $\ModR$ is a \emph{torsion pair} provided that $\mathcal{T}=\{M \in \ModR \mid \Hom_R(M,F)=0 \text{ for all $F \in \mathcal{F}$}\}$ and $\mathcal{F}=\{M \in \ModR \mid \Hom_R(T,M)=0 \text{ for all $T \in \mathcal{T}$}\}$. The class $\mathcal{T}$ (resp. $\mathcal{F}$) is called a \emph{torsion} (resp. \emph{torsion-free}) class. A class of modules fits into a torsion pair as a torsion (resp. torsion-free) class iff it is closed under extensions, direct sums, and homomorphic images (resp. under extensions, direct products, and submodules). Such torsion pair is said to be:
\begin{itemize}
		\item \emph{hereditary} provided that $\mathcal{T}$ is closed under submodules (or, equivalently, $\mathcal{F}$ is closed under injective envelopes),
		\item \emph{faithful} provided that $R \in \mathcal{F}$,
		\item \emph{of finite type} provided that $\mathcal{F}$ is closed under direct limits.
\end{itemize}

With any torsion pair $(\mathcal{T},\mathcal{F})$ in $\ModR$ there is an associated idempotent subfunctor $t$ on $\ModR$ called the \emph{torsion radical}, defined by the property that for any module $M$, we have $t(M) \in \mathcal{T}$ and $M/t(M) \in \mathcal{F}$. It is easy to see that $(\mathcal{T},\mathcal{F})$ is of finite type iff $t$ commutes with direct limits. The following observation will be useful in characterizing cotilting torsion-free classes of cofinite type.

\begin{lem}
\label{L77}
	A hereditary torsion pair $(\mathcal{T},\mathcal{F})$ is of finite type iff there is a set $\mathcal{S}$ of finitely presented modules such that $\mathcal{F}=\{M \in \ModR \mid \Hom_R(S,M)=0 \text{ for all $S \in \mathcal{S}$}\}$.
\end{lem}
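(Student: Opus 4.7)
The plan is to handle the two directions separately. The converse is essentially immediate: if $\mathcal{F}$ has the stated form with each $S \in \mathcal{S}$ finitely presented, then $\Hom_R(S,-)$ commutes with direct limits, so $\mathcal{F}$ is closed under direct limits, and hence $(\mathcal{T},\mathcal{F})$ is of finite type.

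For the forward direction, assume $(\mathcal{T},\mathcal{F})$ is hereditary and of finite type, and write $\mathcal{G}=\{I \subseteq R \mid R/I \in \mathcal{T}\}$ for the associated Gabriel topology. I would take $\mathcal{S}=\{R/I \mid I \in \mathcal{G},\ I \text{ finitely generated}\}$; these modules are all finitely presented. The inclusion $\mathcal{F}\subseteq \{M \mid \Hom_R(S,M)=0 \text{ for all } S \in \mathcal{S}\}$ is automatic, because $\mathcal{S}\subseteq \mathcal{T}$. For the reverse inclusion I argue contrapositively: if $M \notin \mathcal{F}$, pick $0 \neq m \in t(M)$, where $t$ is the torsion radical of $(\mathcal{T},\mathcal{F})$; heredity gives $mR \cong R/\ann_R(m)\in \mathcal{T}$, i.e.\ $\ann_R(m)\in \mathcal{G}$. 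Provided $\mathcal{G}$ has a basis of finitely generated ideals, I can find a finitely generated $I \in \mathcal{G}$ with $I\subseteq \ann_R(m)$, and then $1+I \mapsto m$ defines a nonzero element of $\Hom_R(R/I,M)$ with $R/I \in \mathcal{S}$, as required.

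The main technical obstacle is thus the intermediate claim that, under the hypothesis of hereditariness and finite type, $\mathcal{G}$ admits a basis of finitely generated ideals. I plan to prove this by a direct-limit chase. Fix $I \in \mathcal{G}$ and write $I=\bigcup_\alpha I_\alpha$ as the directed union of its finitely generated subideals, so that $R/I=\varinjlim_\alpha R/I_\alpha$. Since $t$ commutes with direct limits (this is what finite type means) and $t(R/I)=R/I$, I obtain $R/I\cong \varinjlim_\alpha t(R/I_\alpha)$. Writing $t(R/I_\alpha)=K_\alpha/I_\alpha$ with $K_\alpha\supseteq I_\alpha$, the functoriality of $t$ forces $K_\alpha\subseteq K_\beta$ whenever $I_\alpha \subseteq I_\beta$. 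The generator $1+I$ must be the image of some $k+I_\alpha \in K_\alpha/I_\alpha$ under the colimit map, so $1-k \in I$, and therefore $1-k \in I_\beta$ for some $\beta \geq \alpha$; hence $1 \in K_\alpha+I_\beta\subseteq K_\beta$, which forces $K_\beta=R$ and $R/I_\beta \in \mathcal{T}$. This produces the desired finitely generated ideal $I_\beta\subseteq I$ in $\mathcal{G}$ and completes the argument.
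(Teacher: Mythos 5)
Your proof is correct and follows essentially the same route as the paper: the heart of both arguments is to write a cyclic torsion module $R/I$ as a direct limit of finitely presented quotients $R/I_\alpha$ and use that the torsion radical commutes with direct limits (finite type) to find a finitely generated $I_\beta\subseteq I$ with $R/I_\beta\in\mathcal{T}$, which is exactly the paper's argument specialized to its suggested choice of cyclic generators. Your packaging of this as ``$\mathcal{G}$ has a basis of finitely generated ideals,'' followed by the annihilator argument for the reverse inclusion, is a clean but inessential variation on the paper's concluding generation argument.
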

\begin{proof}
	The if-part follows from the fact that $\Hom_R(S,-)$ commutes with direct limit for any finitely presented module $S$.

	Let us prove the other implication. Since the pair is hereditary, there is a set $\mathcal{E}$ of finitely generated modules such that $\mathcal{F}=\Ker \Hom_R(\mathcal{E},-)$ (e.g. the set of all cyclic modules from $\mathcal{T}$). We are left to show that we can find such set consisting of finitely presented modules. Fix $M \in \mathcal{E}$. Let $0 \rightarrow K \rightarrow R^n \rightarrow M \rightarrow 0$ be a free presentation of $M$. We can write $K$ as a directed union $K=\bigcup_{i \in I}K_i$ of its finitely generated submodules. Then $M$ is a direct limit of finitely presented modules $R^n/K_i, i \in I$ in a way that all the maps of this direct system are projections. Since the torsion radical $t$ of the torsion pair $(\mathcal{T},\mathcal{F})$ commutes with direct limits, we have that $M=t(M)=t(\varinjlim_I R^n/K_i)=\varinjlim_I t(R^n/K_i)$. Let $J_i, i \in I$ be submodules of $R^n$ containing $K_i$ such that the torsion-free part of $R^n/K_i$ is isomorphic to $R^n/J_i$ for each $i \in I$. Since $\varinjlim_I R^n/J_i$ is isomorphic to the torsion-free part of $M$, it is zero, and thus $\varinjlim_I J_i=\bigcup_I J_i=R^n$. As $R^n$ is finitely generated, there is $k \in I$ with $J_k=R^n$. It follows that $R/K_k$ is in $\mathcal{T}$, and thus $M$ is a direct limit of finitely presented modules $R^n/K_i, i \geq k$, which all belong to $\mathcal{T}$, because the directed system consisted of projections. Put $\mathcal{S}_M=\{R^n/K_i, i \geq k\}$. Because $\mathcal{S}_M \subseteq \mathcal{T}$ generates $M$, we infer that $\mathcal{T}=\Ker \Hom_R(\mathcal{E} \setminus \{M\} \cup \mathcal{S}_M,-)$. 

	Constructing the set of finitely presented modules $\mathcal{S}_M$ for each $M \in \mathcal{E}$ and putting $\mathcal{S}=\bigcup_{M \in \mathcal{E}}\mathcal{S}_M$, we infer that $\mathcal{T}=\Ker \Hom_R(\mathcal{S},-)$ as desired.
\end{proof}
Given a Gabriel topology $\mathcal{G}$, there is a hereditary torsion pair induced by $\mathcal{G}$ with the torsion class $\{M \mid \ann(m) \in \mathcal{G} \text{ for all $m \in M$}\}$. Also, there is another torsion pair (usually not hereditary) with the torsion class $\{M \in \ModR \mid M=MI \text{ for all $I \in \mathcal{G}$}\}$.
\begin{thm}
		(\cite[\S VI.Theorem 5.1]{BS}) Let $R$ be a ring $R$. There is a 1-1 correspondence between hereditary torsion pairs $(\mathcal{T},\mathcal{F})$ in $\ModR$ and Gabriel topologies $\mathcal{G}$ given by
	$$\mathcal{T} \mapsto \{I \text{ right ideal} \mid R/I \in \mathcal{T}\},$$
	$$\mathcal{G} \mapsto \{M \in \ModR \mid \ann(m) \in \mathcal{G} \text{ for all $m \in M$}\}.$$
\end{thm}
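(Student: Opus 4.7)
The plan is to verify that the two maps are well defined and mutually inverse. Let $\Phi$ denote the map from hereditary torsion pairs to filters of right ideals, and $\Psi$ the map in the other direction.

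First I would check that $\Phi(\mathcal{T},\mathcal{F}) = \mathcal{G}_\mathcal{T}$ is a Gabriel topology. That $\mathcal{G}_\mathcal{T}$ is a filter follows because $\mathcal{T}$ is closed under quotients (upward closure) and under finite intersections via the embedding $R/(I\cap J)\hookrightarrow R/I\oplus R/J$, using that $\mathcal{T}$ is hereditary. For the first axiom, if $I\in\mathcal{G}_\mathcal{T}$ and $t\in R$, the map $r\mapsto tr+I$ identifies $R/(I:t)$ with a submodule of $R/I\in\mathcal{T}$, so heredity gives $(I:t)\in\mathcal{G}_\mathcal{T}$. For the second axiom, given $J$, $I\in\mathcal{G}_\mathcal{T}$ with $(J:t)\in\mathcal{G}_\mathcal{T}$ for all $t\in I$, consider the short exact sequence $0\to (J+I)/J\to R/J\to R/(J+I)\to 0$. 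The quotient $R/(J+I)$ is a quotient of $R/I\in\mathcal{T}$ and hence lies in $\mathcal{T}$; the submodule $(J+I)/J$ is a sum of the cyclic modules $(tR+J)/J\cong R/(J:t)$ for $t\in I$, each in $\mathcal{T}$, hence lies in $\mathcal{T}$. Closure under extensions yields $R/J\in\mathcal{T}$.

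Next I would show $\Psi(\mathcal{G}) = (\mathcal{T}_\mathcal{G},\mathcal{F}_\mathcal{G})$ is a hereditary torsion pair, where $\mathcal{F}_\mathcal{G}$ is defined as the right $\Hom$-orthogonal of $\mathcal{T}_\mathcal{G}$. It suffices to prove $\mathcal{T}_\mathcal{G}$ is closed under submodules, quotients, direct sums, and extensions. The first is immediate from the definition. For quotients use that annihilators only grow under passing to quotients and that $\mathcal{G}$ is a filter. For direct sums, an element of $\bigoplus M_i$ has only finitely many nonzero components, whose annihilator is a finite intersection of ideals of $\mathcal{G}$, and filters are closed under finite intersections. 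Extensions is the key step: given $0\to M'\to M\to M''\to 0$ with $M',M''\in\mathcal{T}_\mathcal{G}$ and $m\in M$, let $I=\ann(m+M')\in\mathcal{G}$; then for each $t\in I$ we have $mt\in M'$, whence $(\ann(m):t)=\ann(mt)\in\mathcal{G}$, and the second Gabriel axiom gives $\ann(m)\in\mathcal{G}$.

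Finally, to show $\Psi\circ\Phi=\mathrm{id}$ and $\Phi\circ\Psi=\mathrm{id}$: for a Gabriel topology $\mathcal{G}$, the ideal $I$ lies in $\mathcal{G}_{\mathcal{T}_\mathcal{G}}$ iff every $\ann(r+I)=(I:r)$ belongs to $\mathcal{G}$, which holds iff $I\in\mathcal{G}$ by the first axiom applied to elements of $R$ and to $I$ itself (taking $r=1$). Conversely, for a hereditary torsion pair $(\mathcal{T},\mathcal{F})$ and $M\in\ModR$, the module $M$ is in $\mathcal{T}_{\mathcal{G}_\mathcal{T}}$ iff $Rm\cong R/\ann(m)\in\mathcal{T}$ for every $m\in M$; since $M$ is a quotient of $\bigoplus_{m\in M}Rm$ and $\mathcal{T}$ is closed under sums and quotients, this is equivalent to $M\in\mathcal{T}$, using heredity in the forward direction to ensure the cyclic submodules $Rm$ lie in $\mathcal{T}$ whenever $M$ does. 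The delicate point throughout is invoking the second Gabriel axiom in the right form for closure under extensions and, symmetrically, invoking closure under extensions when verifying the second axiom; this is the only place where the axioms of Gabriel topology and hereditary torsion pair genuinely match up, and it is where I expect the main bookkeeping to occur.
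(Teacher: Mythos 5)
Your proposal is correct. The paper itself gives no argument for this statement---it is quoted verbatim from Stenstr\"om \cite[Ch.~VI, Theorem~5.1]{BS}---and your verification (well-definedness of both assignments via the exact matching of the two Gabriel axioms with heredity and closure under extensions, then the two easy inverse checks) is precisely the standard proof found there. The only blemish is notational: since the paper works with right $R$-modules, the cyclic submodule generated by $m$ should be written $mR \cong R/\ann(m)$ rather than $Rm$; this does not affect the argument.
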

\begin{notation}
	Given a set of (right) ideals $\mathcal{I}$, we denote by $\mathcal{I}\Div$ the class of all $\mathcal{I}$-divisible right modules, that is, the class $\{M \in \ModR \mid M=MI \text{ for all $I \in \mathcal{I}$}\}$.
\end{notation}
\subsection{Prime spectrum}
Given a commutative ring $R$, we denote by $\spec R$ the prime spectrum of $R$. Set $\spec R$ is endowed with the Zariski topology, i.e. the topology with closed sets being the sets of form
$$V(I)=\{\mathfrak{p} \in \spec R \mid I \subseteq \mathfrak{p}\},$$
for some ideal $I$ of $R$. Following the work of Thomason (\cite{TH}), we say that a subset of $\spec R$ is \emph{Thomason} if it is a union of sets $V(I)$ with $I$ being finitely generated (equivalently, if it is a union of Zariski closed sets with quasi-compact complements). It is well-known that Thomason subsets of $\spec R$ correspond bijectively to finitely generated Gabriel topologies (by assigning to a finitely generated Gabriel topology the set of all primes contained in it). We will prove a ''faithful`` version of this fact in Theorem~\ref{T01} for convenience.

For any $M \in \ModR$, symbol $\ass M$ stands for the set of all associated primes of $M$, that is, all primes $\mathfrak{p} \in \spec R$ such that $R/\mathfrak{p}$ embeds into $M$. Similarly, for a subclass $\mathcal{C}$ of $\ModR$ we fix a notation $\ass \mathcal{C}=\bigcup_{M \in \mathcal{C}}\ass M$.

We denote the localization of $R$ at prime $\mathfrak{p}$ by $R_\mathfrak{p}$. For any $M \in \ModR$ we put $M_\mathfrak{p}=M \otimes_R R_\mathfrak{p}$. The set of all primes $\mathfrak{p}$ such that $M_\mathfrak{p}$ is non-zero is called the \emph{support} of $M$ and denoted by $\supp M$. It is well-known that $\supp M=\{\mathfrak{p} \in \spec R \mid \ann M \subseteq \mathfrak{p}\}$ provided that $M$ is finitely generated. 

\subsection{Tilting and cotilting}
We use the following definition of an (infinitely generated) right $1$-tilting module over an arbitrary ring $R$ (\cite{TC},\cite{AC}).
	\begin{definition}
		An $R$-module $T$ is said to be \emph{1-tilting} if
		\begin{itemize}
			\item $\pd T \leq 1$,
			\item $\ext_R^1(T,T^{(X)})=0$ for any set $X$,
			\item there is an exact sequence $0 \rightarrow R \rightarrow T_0 \rightarrow T_1 \rightarrow 0$, where $T_i$ is a direct summand of a direct sum of copies of $T$ for each $i=0,1$.
		\end{itemize}
		The class $\mathcal{T}=T^\perp$ is called a \emph{1-tilting class}, and the induced cotorsion pair $(\mathcal{A},\mathcal{T})$ a \emph{1-tilting cotorsion pair}. Two 1-tilting modules $T$ and $T'$ are said to be equivalent if $T^\perp=T'^\perp$.
	\end{definition}
	\begin{rem}
			Given a module $M$, denote by $\gen(M)$ the class of all homomorphic images of direct sums of copies of $M$. We remark that module $T$ is 1-tilting if and only if $\gen(T)=T^\perp$ (see \cite[Lemma 14.2]{GT}), providing an easier alternative definition. In particular, note that a $1$-tilting class is a torsion class. Also, a class $\mathcal{T}$ is 1-tilting if and only if it is a special preenveloping torsion class (\cite[Theorem 14.4]{GT}).
	\end{rem}
	Classical tilting theory of artin algebras focuses on finitely presented tilting modules, which is in stark contrast with the commutative setting, where only infinitely generated ones are interesting:
	\begin{lem}
			(\cite[Lemma 1.2]{TP}) Let $R$ be a commutative ring. Then any 1-tilting module equivalent to a finitely generated one is projective.
	\end{lem}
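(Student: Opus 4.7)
Since equivalent $1$-tilting modules share the same tilting class, and since any $1$-tilting module $T$ with $T^\perp = \ModR$ is automatically projective (indeed, $\ext_R^1(T,-) \equiv 0$ together with $\pd T \le 1$ forces $\pd T = 0$), it suffices to prove that every finitely generated $1$-tilting module $T$ over a commutative ring $R$ is projective.

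Fix such a $T$ and consider the defining tilting exact sequence $0 \to R \to T_0 \to T_1 \to 0$, with $T_0, T_1 \in \op{Add}(T)$. Writing $T_0$ as a direct summand of some $T^{(X)}$, the image of $1 \in R$ inside $T^{(X)}$ has finite support, so the composition $R \hookrightarrow T_0 \hookrightarrow T^{(X)}$ factors through a finite sub-direct sum. This gives a monomorphism $R \hookrightarrow T^n$ for some $n \ge 1$, and in turn shows that $T$ is \emph{faithful}: any $r \in R$ with $rT = 0$ annihilates $T^n$ and hence the embedded copy of $R$, forcing $r = 0$.

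The central claim is that $R$ itself belongs to the tilting class $T^\perp = \gen(T)$. Granting this, the cyclicity of $R$ guarantees that any surjection from a direct sum of copies of $T$ onto $R$ factors through a finite subcoproduct, yielding an epimorphism $T^k \twoheadrightarrow R$. Projectivity of $R$ splits this map, so $R$ is a direct summand of $T^k$; hence $T$ generates $R$ and therefore all of $\ModR$. Consequently $\gen T = T^\perp = \ModR$, and the opening observation finishes the proof by giving that $T$ is projective.

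The genuine obstacle is thus establishing $R \in T^\perp$. Applying $\Hom_R(T,-)$ to the tilting sequence and using $T_0, T_1 \in \op{Add}(T) \subseteq T^\perp$, one sees that $\ext_R^1(T,R)$ is the cokernel of $\Hom_R(T,T_0) \to \Hom_R(T,T_1)$, so the question reduces to lifting every map $T \to T_1$ along the epimorphism $T_0 \twoheadrightarrow T_1$. The approach I would pursue is to invoke the finite-type character of $1$-tilting classes and write $T^\perp = \mathcal{S}^\perp$ for a set $\mathcal{S}$ of finitely presented modules of $\pd \le 1$; for finitely generated $T$ this set can be described concretely in terms of Auslander--Bridger transposes of a finite presentation of $T$, and the faithfulness of $T$ secured above should force the resulting class to coincide with $\ModR$, making $R \in T^\perp$ automatic. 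This finite-type reduction is the main technical hurdle, and is the step where commutativity of $R$ enters decisively.
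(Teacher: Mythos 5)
Your reductions are fine: it does suffice to treat a finitely generated $1$-tilting module $T$, the embedding $R\hookrightarrow T^n$ and the splitting argument showing that $R\in\gen(T)$ would force $T^\perp=\ModR$ and hence $\pd T=0$ are all correct. But the proof stops exactly at what you yourself call the central claim: you never prove $R\in T^\perp$, i.e. $\ext_R^1(T,R)=0$, and that claim \emph{is} the lemma — everything else is routine. The last paragraph is a plan, not an argument, and as stated it would not go through. First, faithfulness of $T$ cannot ``force the resulting class to coincide with $\ModR$'': every $1$-tilting module over any ring is faithful (by the third tilting axiom $R$ embeds into a module in $\operatorname{Add}(T)$, exactly your $R\hookrightarrow T^n$ argument), yet tilting classes are usually proper — e.g. $T=\mathbb{Q}\oplus\mathbb{Q}/\mathbb{Z}$ over $\mathbb{Z}$ is faithful and $T^\perp$ is the class of divisible groups. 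Second, invoking the finite-type theorem and writing $T^\perp=\mathcal{S}^\perp$ with $\mathcal{S}\subseteq\modR$ of projective dimension $\le 1$ only reformulates the problem: you would still have to show $\mathcal{S}^\perp=\ModR$, and no mechanism for that is offered; describing $\mathcal{S}$ ``in terms of transposes of a presentation of $T$'' is also not something the finite-type theorem gives you.

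What is missing is an essential use of the finite generation of $T$ beyond factoring maps through finite subsums — some Nakayama-type input. For instance, within the toolkit of this paper one can finish as follows: by finite type and Lemma~\ref{L05} (or Theorem~\ref{T01}), $T^\perp=\mathcal{I}\Div$ for a set $\mathcal{I}$ of finitely generated faithful ideals; since $T\in T^\perp$ one has $T=IT$ for every $I\in\mathcal{I}$, and the determinant trick (Cayley--Hamilton) applied to the finitely generated module $T$ produces $a\in I$ with $(1-a)T=0$, so faithfulness of $T$ gives $1=a\in I$, i.e. $I=R$; hence $T^\perp=\ModR$ and $T$ is projective. Note also that the paper itself does not prove this lemma but cites \cite[Lemma 1.2]{TP}, so there is no internal proof to compare with; judged on its own, your proposal has a genuine gap at its key step.
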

	On the other hand, infinitely generated tilting modules share a lot of properties of their classical finitely presented counterparts. In particular, they still serve as a generalization of progenerators from the classical Morita equivalence, as they induce equivalences of subcategories of module categories, or derived equivalences between triangulated subcategories of derived module categories (\cite{B2}, \cite{BMT}).

	Even though tilting modules over commutative rings are almost always infinitely generated, the tilting classes can be fully described in terms of the small module category $\modR$. Using the Small Object Argument, one can show that given a set $\mathcal{S}$ of finitely presented modules of projective dimension at most $1$, the class $\mathcal{S}^\perp$ is 1-tilting. Crucial results by Bazzoni-Herbera (and Bazzoni-Štovíček for general n-tilting classes) show that the converse is also true. We recall that a subcategory $\mathcal{S}$ of $\modR$ is \emph{resolving}, if all finitely generated projectives are contained in $\mathcal{S}$, and $\mathcal{S}$ is closed under extensions, direct summands, and syzygies.
	\begin{thm}
\label{T67}
			(\cite{BH}, \cite{BST}) Let $R$ be a ring. There is a 1-1 correspondence between 1-tilting classes $\mathcal{T}$ in $\ModR$, and resolving subcategories $\mathcal{S}$ of $\modR$ contained in $\{M \in \modR \mid \pd M \leq 1\}$. The corespondence is given by the assignments $\mathcal{S} \mapsto \mathcal{S}^\perp$ and $\mathcal{T} \mapsto (^\perp\mathcal{T}) \cap \modR$.
	\end{thm}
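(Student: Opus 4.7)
The plan is to show each assignment lands in the stated target class and that the two compositions are mutually inverse. For the direction $\mathcal{S}\mapsto\mathcal{S}^\perp$: since $\modR$ has only a set of isomorphism classes, we may treat $\mathcal{S}$ as a set, and the cotorsion pair $(\mathcal{A},\mathcal{S}^\perp)$ generated by $\mathcal{S}$ is complete. By the lemma quoted above from \cite[Corollary 6.14]{GT}, every module in $\mathcal{A}$ is a direct summand of an $\mathcal{S}$-filtered module, and since filtrations preserve projective dimension, $\pd M\leq 1$ for every $M\in\mathcal{A}$. Completeness applied to $R$ yields a special approximation sequence $0\to R\to T_0\to T_1\to 0$ with $T_0\in\mathcal{S}^\perp$ and $T_1\in\mathcal{A}$; a standard verification (using the characterization $\gen(T)=T^\perp$ from the remark) shows that $T=T_0\oplus T_1$ is $1$-tilting with $T^\perp=\mathcal{S}^\perp$.

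Conversely, for a $1$-tilting class $\mathcal{T}$, set $\mathcal{S}_0:=({}^\perp\mathcal{T})\cap\modR$. This is resolving: $^\perp\mathcal{T}$ contains the projectives and is closed under extensions and direct summands as the left half of a cotorsion pair, and any $M\in\mathcal{S}_0$ has $\pd M\leq 1$ (inherited from the tilting module), so its first syzygy taken via a finite projective presentation is finitely generated projective and stays in $\mathcal{S}_0$. The inclusion $\mathcal{T}\subseteq\mathcal{S}_0^\perp$ is immediate from $\mathcal{S}_0\subseteq{}^\perp\mathcal{T}$.

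The heart of the theorem, and the main obstacle, is the reverse inclusion $\mathcal{S}_0^\perp\subseteq\mathcal{T}$ — equivalently, that every $1$-tilting class is \emph{of finite type}. I would invoke this as the Bazzoni-Herbera theorem \cite{BH} (extended to arbitrary $n$ by Bazzoni-Štovíček \cite{BST}). Its proof exploits the closure of $^\perp\mathcal{T}$ under direct limits, a structural consequence of $\mathcal{T}$ being a tilting class, together with a set-theoretic filtration argument (via the Hill lemma) showing that every module in $^\perp\mathcal{T}$ admits a transfinite filtration by modules drawn from $\mathcal{S}_0$; this forces $\mathcal{S}_0^\perp=\mathcal{T}$. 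Without this step the bijection would fail at the surjectivity of $\mathcal{S}\mapsto\mathcal{S}^\perp$.

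Bijectivity then reduces to the check $({}^\perp(\mathcal{S}^\perp))\cap\modR=\mathcal{S}$ for any resolving $\mathcal{S}\subseteq\{M\in\modR\mid\pd M\leq 1\}$. The inclusion $\supseteq$ is immediate; for $\subseteq$, a finitely presented module $M\in{}^\perp(\mathcal{S}^\perp)$ is a direct summand of an $\mathcal{S}$-filtered module by \cite[Corollary 6.14]{GT}, and a filtration-chasing argument using that $\mathcal{S}$ is closed in $\modR$ under extensions, summands, and syzygies places $M$ inside $\mathcal{S}$ itself. Combined with the Bazzoni-Herbera theorem, this completes the bijection.
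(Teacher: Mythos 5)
The paper itself gives no proof of this theorem---it is imported from \cite{BH} and \cite{BST}---and your proposal takes essentially the same route: the genuinely hard step (every $1$-tilting class is of finite type, i.e. $\mathcal{T}=(({}^\perp\mathcal{T})\cap\modR)^\perp$) is invoked as a citation, while the surrounding verifications (that $\mathcal{S}^\perp$ is $1$-tilting via the complete cotorsion pair generated by the set $\mathcal{S}$, that $({}^\perp\mathcal{T})\cap\modR$ is resolving with projective dimension at most $1$, and the recovery $({}^\perp(\mathcal{S}^\perp))\cap\modR=\mathcal{S}$ by cutting a filtration down to a finite one, as the paper itself does via the Hill lemma in Theorem~\ref{T01}) are standard and correct. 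The only inaccuracy is your parenthetical gloss of how \cite{BH} proceeds---${}^\perp\mathcal{T}$ is in general \emph{not} closed under direct limits for a $1$-tilting class---but since you treat that result as a black box, this does not affect your argument.
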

	\begin{definition}
			An $R$-module $C$ is said to be \emph{1-cotilting} if
			\begin{itemize}
			\item $\id C \leq 1$,
			\item $\ext_R^1(C^X,C)=0$ for any set $X$,
			\item there is an exact sequence $0 \rightarrow C_1 \rightarrow C_{0} \rightarrow W \rightarrow 0$, where $W$ is an injective cogenerator in $\ModR$, and $C_i$ is a direct summand of a direct product of copies of $C$ for each $i=0,1$.
			\end{itemize}
			The class $\mathcal{C}={}^\perp C$ is called a $\emph{1-cotilting class induced by C}$ and 1-cotilting modules $C,C'$ are said to be \emph{equivalent} if their induced cotilting classes coincide.
	\end{definition}
	Unlike tilting classes, 1-cotilting classes do not in general come from a set of finitely presented modules unless the ring is noetherian (see a counter-example due to Bazzoni in \cite[Proposition 4.5]{BTC}).
	\begin{definition}
		A $1$-cotilting class $\mathcal{C}$ is of $\emph{cofinite type}$ provided there is a set of finitely presented modules $\mathcal{S}$ of projective dimension at most $1$ such that $\mathcal{C}=\mathcal{S}^\intercal$.
	\end{definition}
	Given a $1$-tilting right module $T$, its character module $T^+=\Hom_{\mathbb{Z}}(T,\mathbb{Q}/\mathbb{Z})$ is a $1$-cotilting left $R$-module. Furthermore, if $\mathcal{S}$ is a subset of $\modR$ such that $T^\perp=\mathcal{S}^\perp$, then the cotilting class $^\perp(T^+)$ equals $\mathcal{S}^\intercal$, and thus is of cofinite type. In fact, the assigment $T \mapsto T^+$ induces a 1-1 correspondence between equivalence classes of 1-tilting right $R$-modules and equivalence classes of 1-cotilting left modules of cofinite type (meaning that the induced 1-cotilting class is of cofinite type). For details, see \cite[\S 15]{GT}.
\section{Tilting and cofinite-type cotilting classes}
\subsection{General formulas}
	We start with recalling the notion of transpose from \cite{AB}. Although this idea was originally used mostly in the artin algebra setting, it has proven useful in classifying tilting classes over commutative noetherian rings in \cite{CN}. In fact, it will serve the same purpose over a general commutative ring.
\begin{definition}
		Let $R$ be a ring and $M$ a finitely presented left $R$-module. Let $P_1 \xrightarrow{f} P_0 \rightarrow M \rightarrow 0$ be a presentation of $M$ with both $P_0$ and $P_1$ finitely generated projectives. We use the notation $(-)^*$ for the regular module duality functor $\Hom_R(-,R)$. The \emph{(Auslander-Bridger) transpose} of $M$ is obtained as the cokernel of the map of right $R$-modules $f^*: P_0^* \rightarrow P_1^*$. We denote the transpose by $\tr M$.
\end{definition}
\begin{rem}
		 It is important to note that the right $R$-module $\tr M$ is uniquely determined only up to \emph{stable equivalence}, that is, up to splitting off or adding a projective direct summand (\cite[\S 2.1]{AB}). We will use the notation $M \steq N$ for $M$ being stably equivalent to $N$.

		 However, there is a nice choice of a concrete representative module for $\tr M$ if $\pd_R M \leq 1$. Indeed, then $\ext_R^1(M,R) \steq \tr M$ (see Lemma~\ref{L03} below).
\end{rem}
We gather several well-known homological formulae for the transpose we will need later on, and reprove them in our setting for convenience.
\begin{lem}
	\emph{(\cite[Lemma 2.9]{CN})}\label{L02}
	Let $R$ be a ring, $M$ a non-zero left finitely presented $R$-module, such that $\Hom_R(M,R)=0$. Then:
	\begin{enumerate}
			\item[(i)] $\pd_R \tr M = 1$ and $\tr M$ is finitely presented,
			\item[(ii)] $\Hom_R(M,-)$ and $\tor_1^R(\tr M,-)$ are isomorphic functors,
			\item[(iii)] $\ext_R^1(\tr M,-)$ and $(- \otimes_R M)$ are isomorphic functors.
	\end{enumerate}
\end{lem}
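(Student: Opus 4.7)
The starting point is to fix a projective presentation $P_1 \xrightarrow{f} P_0 \to M \to 0$ with $P_0,P_1$ finitely generated projectives and then apply the duality $(-)^* = \Hom_R(-,R)$. This yields the four-term exact sequence
\[
0 \to M^* \to P_0^* \xrightarrow{f^*} P_1^* \to \tr M \to 0,
\]
and the hypothesis $\Hom_R(M,R)=0$ collapses this into the short exact sequence $0 \to P_0^* \to P_1^* \to \tr M \to 0$. Since $P_0^*, P_1^*$ are finitely generated projective right $R$-modules, this is a projective resolution and already gives finite presentation of $\tr M$ together with $\pd_R \tr M \le 1$. To upgrade this to equality I would argue by contradiction: if $\tr M$ were projective, the sequence would split, so $f^*$ would be a split monomorphism; applying $(-)^*$ once more and using the natural isomorphism $P_i^{**} \cong P_i$ for finitely generated projectives, $f$ becomes a split epimorphism, hence $M$ is a finitely generated projective left $R$-module. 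But a projective module with $\Hom_R(M,R)=0$ is zero, contradicting $M \neq 0$. This proves (i).

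For (ii) and (iii) I would exploit the canonical natural isomorphism $P^* \otimes_R N \cong \Hom_R(P,N)$ (resp. $\Hom_R(P^*,N) \cong N \otimes_R P$) that holds for any finitely generated projective $P$ and any module $N$. Tensoring the projective resolution of $\tr M$ with a left $R$-module $N$ and invoking the first isomorphism, the complex $P_0^* \otimes_R N \to P_1^* \otimes_R N$ becomes identified with $\Hom_R(P_0,N) \to \Hom_R(P_1,N)$. The kernel of the latter, by left-exactness of $\Hom_R(-,N)$ applied to the presentation of $M$, is precisely $\Hom_R(M,N)$. Since the kernel of the former is $\tor_1^R(\tr M,N)$, one obtains the asserted natural isomorphism in (ii).

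For (iii), applying $\Hom_R(-,N)$ to the same projective resolution of $\tr M$ and using the second isomorphism identifies the cokernel $P_1 \otimes_R N \to P_0 \otimes_R N$ with $\ext_R^1(\tr M,N)$; but that cokernel is exactly $N \otimes_R M$ by right-exactness of tensor product on the presentation of $M$ (possibly after rewriting as $N\otimes_R P_i$ for a right module $N$ and left modules $P_i$). The main subtlety throughout is bookkeeping on sides (left vs.\ right) and verifying that all the displayed isomorphisms are natural in $N$; the only genuinely nontrivial point is the sharp equality $\pd_R \tr M = 1$ in (i), which is where the hypotheses $M\neq 0$ and $\Hom_R(M,R)=0$ are jointly used.
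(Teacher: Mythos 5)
Your proposal is correct and follows essentially the same route as the paper: dualize a finite presentation of $M$, use $\Hom_R(M,R)=0$ to obtain the short exact sequence $0 \to P_0^* \to P_1^* \to \tr M \to 0$ (giving finite presentation and $\pd_R \tr M \le 1$, with strictness by the splitting/contradiction argument), and then derive (ii) and (iii) from the natural isomorphisms $\Hom_R(P,N)\simeq P^*\otimes_R N$ and $\Hom_R(P^*,N)\simeq N\otimes_R P$ for finitely generated projectives. The only cosmetic difference is that you spell out the side bookkeeping and the ``$\tr M$ projective $\Rightarrow$ contradiction'' step in more detail than the paper does.
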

\begin{proof}
	\begin{enumerate}
			\item[(i)] Since $M$ is finitely presented, there is a part of a projective resolution of $M$
				\begin{equation}
					\label{E01}
				P_{1} \rightarrow P_0 \rightarrow M \rightarrow 0,
				\end{equation}
				consisting of finitely generated projectives. Applying $(-)^*$ we get a complex
				$$0 \leftarrow \tr M \leftarrow P_{1}^* \leftarrow P_0^* \leftarrow 0,$$
				which is exact by our hypothesis on $M$, showing that $\pd\tr  M \leq 1$, and that $\tr M$ is finitely presented. If $\tr M$ was projective, then $M$ is projective, which together with $M^*=0$ implies that $M=0$, a contradiction. Hence, $\pd \tr M=1$.
		\item[(ii)] Let $N$ be a left $R$-module. By definition, $\ext_R^1(M,N)$ is the first homology of the complex obtained by applying $\Hom_R(-,N)$ on $(\ref{E01})$. We now use the natural isomorphism $\Hom_R(P,N) \simeq P^* \otimes_R N$ where $P$ is a finitely generated projective (see \cite[Proposition 20.6]{AF}) to infer the desired isomorphism.
		\item[(iii)] Analogous.
	\end{enumerate}
\end{proof}
A sort of converse for Lemma~\ref{L02} also holds, if we choose a representative of the transpose well enough. Unlike Lemma~\ref{L02}, this result does not generalize to higher projective dimension in a straightforward way.
\begin{lem}
		\label{L03}
		Let $S \in \modR$ be such that $\pd_R S \leq 1$. Put $S^\dagger=\ext_R^1(S,R)$. Then $S^\dagger$ is a finitely presented left $R$-module with $(S^\dagger)^*=0$, and $S^\dagger \steq \tr S$.
\end{lem}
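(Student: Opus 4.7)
The plan is to take a short projective resolution witnessing $\pd_R S \leq 1$ and just read off all three conclusions from it. Concretely, since $S$ is finitely presented of projective dimension at most $1$, I can find a resolution
$$0 \longrightarrow P_1 \xrightarrow{\ f\ } P_0 \longrightarrow S \longrightarrow 0$$
with $P_0$ and $P_1$ finitely generated projective right $R$-modules. Applying $(-)^\ast=\Hom_R(-,R)$ and using that $\ext_R^{\geq 2}(S,R)=0$ gives the exact sequence
$$0 \longrightarrow S^\ast \longrightarrow P_0^\ast \xrightarrow{\ f^\ast\ } P_1^\ast \longrightarrow S^\dagger \longrightarrow 0$$
of left $R$-modules.

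From this one sequence I can immediately harvest two of the three claims. First, the tail $P_0^\ast \to P_1^\ast \to S^\dagger \to 0$ is a presentation of $S^\dagger$ by finitely generated projectives, so $S^\dagger$ is finitely presented. Second, using the very same resolution to compute the transpose, $\tr S$ is by definition the cokernel of $f^\ast$, so this particular representative of $\tr S$ is literally equal to $S^\dagger$, which certainly yields $S^\dagger \steq \tr S$.

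It remains to show $(S^\dagger)^\ast = 0$. I apply $(-)^\ast$ to the presentation $P_0^\ast \xrightarrow{f^\ast} P_1^\ast \to S^\dagger \to 0$, producing the left-exact sequence
$$0 \longrightarrow (S^\dagger)^\ast \longrightarrow P_1^{\ast\ast} \xrightarrow{\ f^{\ast\ast}\ } P_0^{\ast\ast}.$$
Because $P_0$ and $P_1$ are finitely generated projective, the evaluation maps $P_i \to P_i^{\ast\ast}$ are natural isomorphisms, and under this identification $f^{\ast\ast}$ corresponds to $f$ itself. Since $f$ was injective (the resolution is a short exact sequence), so is $f^{\ast\ast}$, hence $(S^\dagger)^\ast = \Ker f^{\ast\ast}=0$.

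I do not expect any real obstacle here: the statement is essentially a bookkeeping exercise once the correct resolution is fixed. The only point that requires minor care is the claim $S^\dagger \steq \tr S$, where one must remember that $\tr S$ is only defined up to stable equivalence, so it suffices to exhibit \emph{one} presentation realising $\tr S$ as $S^\dagger$, which the chosen resolution does.
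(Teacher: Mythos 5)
Your proof is correct and is essentially the paper's own argument: dualize a resolution $0\to P_1\xrightarrow{f} P_0\to S\to 0$ by finitely generated projectives to get $0\to S^*\to P_0^*\xrightarrow{f^*} P_1^*\to S^\dagger\to 0$, read off from it that $S^\dagger$ is finitely presented and coincides (for this presentation) with $\tr S$, and dualize once more, identifying $P_i\cong P_i^{**}$ and $f^{**}$ with the injective map $f$, to conclude $(S^\dagger)^*=0$ --- which is exactly the paper's ``applying $(-)^*$ again we get back to the original sequence'' step made explicit. One cosmetic remark: exactness at $S^\dagger$ in the four-term sequence comes from $\ext_R^1(P_0,R)=0$ (projectivity of $P_0$), not from the vanishing of $\ext_R^{\geq 2}(S,R)$.
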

\begin{proof}
	Let
	\begin{equation}
			\label{E02}
	0 \rightarrow P_1 \rightarrow P_0 \rightarrow S \rightarrow 0,
	\end{equation}
	be a projective resolution of $S$ consisting of finitely generated projectives. Applying $(-)^*$ we get an exact sequence
	$$0 \leftarrow S^\dagger \leftarrow P_{1}^* \leftarrow P_0^* \leftarrow S^* \leftarrow 0,$$
	showing that $S^\dagger$ is finitely presented, and by the definition $S^\dagger \steq \tr S$. Applying $(-)^*$ again we get back to the exact sequence (\ref{E02}), proving that $(S^\dagger)^*=0$.
\end{proof}
\begin{notation}
	We fix the notation $S^\dagger=\ext_R^1(S,R)$ for any $S \in \ModR$.
\end{notation}
Combining Lemma~\ref{L02} and Lemma~\ref{L03} we obtain:
\begin{cor}
		\label{C01}
	Let $\mathcal{S}$ be a set of finitely presented right $R$-modules of projective dimension 1. Let $\mathcal{T}=\mathcal{S}^\perp$ and $\mathcal{C}=\mathcal{S}^\intercal$ be the induced 1-tilting and 1-cotilting class of cofinite type. Then:
	\begin{itemize}
	\item $\mathcal{T}=\bigcap_{S \in \mathcal{S}} \Ker (- \otimes_R S^\dagger),$
	\item $\mathcal{C}=\bigcap_{S \in \mathcal{S}} \Ker \Hom_R(S^\dagger,-).$
	\end{itemize}
\end{cor}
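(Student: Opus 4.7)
The plan is to reduce both formulas to a pair of natural functorial isomorphisms, one per bullet:
\[
\ext_R^1(S,-) \;\cong\; (-\otimes_R S^\dagger), \qquad \tor_R^1(S,-) \;\cong\; \Hom_R(S^\dagger,-),
\]
each valid for every $S \in \mathcal{S}$. Given these, intersecting kernels over $S \in \mathcal{S}$ and unfolding the definitions of $\mathcal{S}^\perp$ and $\mathcal{S}^\intercal$ yields the two claimed descriptions of $\mathcal{T}$ and $\mathcal{C}$ immediately.

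First I would dispose of the trivial case: if $S$ is projective then $\ext_R^1(S,-) = \tor_R^1(S,-) = 0$ and $S^\dagger = 0$, so $S$ contributes nothing on either side. Thus I may assume $\pd_R S = 1$. By Lemma~\ref{L03}, $S^\dagger$ is then a nonzero finitely presented left $R$-module satisfying $(S^\dagger)^* = 0$ and $S^\dagger \steq \tr S$. Applying parts (ii) and (iii) of Lemma~\ref{L02} to the left module $S^\dagger$ gives natural isomorphisms
\[
\Hom_R(S^\dagger,-) \cong \tor_R^1(\tr S^\dagger,-), \qquad \ext_R^1(\tr S^\dagger,-) \cong (-\otimes_R S^\dagger).
\]
Since transpose is an involution up to stable equivalence and respects stable equivalence, $\tr(S^\dagger) \steq \tr(\tr S) \steq S$; and stable equivalence is invisible to $\ext^1$ and $\tor_1$ (a projective direct summand contributes nothing in positive degree), so $\tr S^\dagger$ may be replaced by $S$ in the displayed isomorphisms, producing the two natural isomorphisms stated at the start.

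No deep obstacle arises — the main bookkeeping issue is sidedness, since $S$ is a right $R$-module while $S^\dagger$ is a left $R$-module, so $\mathcal{T}$ and $\mathcal{C}$ sit in opposite module categories. A self-contained alternative that sidesteps any explicit appeal to stable equivalence is to work directly from a single finitely generated projective resolution $0 \to P_1 \to P_0 \to S \to 0$: its dual produces the right-exact sequence $P_0^* \to P_1^* \to S^\dagger \to 0$, and the natural identification $\Hom_R(P,M) \cong M \otimes_R P^*$ for finitely generated projective $P$ immediately matches the cokernel computation for $\ext_R^1(S,M)$ with that for $M \otimes_R S^\dagger$, and the kernel computation for $\tor_R^1(S,N)$ with that for $\Hom_R(S^\dagger,N)$.
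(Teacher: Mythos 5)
Your argument is correct and follows essentially the same route as the paper: apply Lemma~\ref{L03} to see that $S^\dagger$ is finitely presented with $(S^\dagger)^*=0$, invoke Lemma~\ref{L02}(ii)--(iii) for $S^\dagger$, and use $\tr S^\dagger \steq S$ together with the fact that $\ext^1$ and $\tor_1$ ignore projective summands. The extra remarks (the projective case, which is vacuous since $\pd S=1$, and the direct resolution computation) are harmless additions to the same proof.
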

\begin{proof}
		By Lemma~\ref{L03}, the module $S^\dagger$ satisfies $S^*=0$ and $\pd S \leq 1$ for each $S \in \mathcal{S}$. Therefore, we can use Lemma~\ref{L02}(3) to infer that $\bigcap_{S \in \mathcal{S}}\Ker \ext_R^1(\tr S^\dagger,-)=\bigcap_{S \in \mathcal{S}}\Ker (- \otimes_R S^\dagger)$. As $\tr S^\dagger \steq S$ by Lemma~\ref{L03}, this class is equal to $\mathcal{T}$ as desired. The formula for the cotilting class is derived analogously, using Lemma~\ref{L02}(2).
\end{proof}
\subsection{Commutative rings}
From now on, let $R$ be a {\bf commutative ring}. We begin by proving that any cofinite type 1-cotilting torsion pair is hereditary. Note that this in general fails for non-commutative rings\footnote{Counter-example (communicated to the author by Jan Šťovíček) can be obtained as follows. Let $A$ be a left hereditary right artinian ring such that the only projective injective right module is zero (e.g. the Kronecker algebra over a field). Then the class of all projective (equally, flat) right $A$-modules is equal to $(\operatorname{R-mod})^\intercal$, and thus is a 1-cotilting class of cofinite type not closed under injective envelopes.}. If $R$ is not noetherian, the classical theory of associated primes does not function well. Indeed, there can be non-zero modules with no associated primes\footnote{Easy example can be obtained as follows. Let $R$ be a valuation domain of Krull dimension 1 with idempotent radical (e.g. the ring of all Puiseux series over a field). Then it is an easy exercise to show that any cyclic module of form $R/rR$ for $r \in R$ non-zero has zero socle, and thus it has no associated primes.}. The following notion will prove useful in the cotilting setting.

\begin{definition}
		Given a class $\mathcal{C}$ of modules, let $\SubLim(\mathcal{C})$ denote the smallest (isomorphism-closed) subclass of $\ModR$ containing $\mathcal{C}$ closed under direct limits and submodules.

		We say that a prime $\mathfrak{p}$ is \emph{vaguely associated} to a module $M$ if $R/\mathfrak{p}$ is contained in $\SubLim(\{M\})$. Denote the set of all vaguely associated primes of $M$ by $\vass M$. 
\end{definition}
First, we note that this is indeed a generalization of the concept of associated primes over noetherian rings.
\begin{lem}
	Let $R$ be noetherian, then $\vass M = \ass M$ for any $R$-module $M$.
\end{lem}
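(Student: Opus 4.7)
The plan is to split the equality into the two inclusions. The inclusion $\ass M \subseteq \vass M$ is immediate from the definitions: if $R/\mathfrak{p}$ embeds into $M$, then $R/\mathfrak{p}$ is (up to isomorphism) a submodule of $M$, so it lies in $\SubLim(\{M\})$.

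For the reverse inclusion $\vass M \subseteq \ass M$, I would introduce the class
$$\mathcal{D} = \{N \in \ModR \mid \ass N \subseteq \ass M\}$$
and aim to show that $\mathcal{D}$ contains $\SubLim(\{M\})$. Clearly $M \in \mathcal{D}$. Closure under submodules is immediate from the monotonicity of $\ass$ along inclusions. The key task is closure under direct limits, i.e.\ to prove that over a noetherian $R$, one has $\ass (\varinjlim_{i \in I} N_i) \subseteq \bigcup_{i \in I} \ass N_i$. Granted this, $\mathcal{D}$ contains $\SubLim(\{M\})$, and for any $\mathfrak{p} \in \vass M$ we obtain $R/\mathfrak{p} \in \mathcal{D}$, so $\{\mathfrak{p}\} = \ass(R/\mathfrak{p}) \subseteq \ass M$.

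For the direct-limit step I would argue as follows. Suppose $\mathfrak{p} \in \ass N$, where $N = \varinjlim_{i \in I} N_i$ with structural maps $f_{ij} \colon N_i \to N_j$ and $f_i \colon N_i \to N$. Pick $x \in N$ with $\mathfrak{p} = \ann_R(x)$ and lift $x$ to some $x_i \in N_i$; set $y_j = f_{ij}(x_i) \in N_j$ for $j \geq i$. From $f_j(y_j) = x$ one gets $\ann(y_j) \subseteq \mathfrak{p}$, while $rx = 0$ in $N$ means $r y_j = 0$ for some sufficiently large $j \geq i$ (the standard criterion for vanishing in a direct limit). Consequently, $\mathfrak{p} = \bigcup_{j \geq i} \ann(y_j)$ as an ascending directed union. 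Noetherianness of $R$ then forces the union to stabilise (apply it to a finite generating set of $\mathfrak{p}$ and take a common upper bound of the indices), yielding $j \geq i$ with $\mathfrak{p} = \ann(y_j)$, hence $\mathfrak{p} \in \ass N_j$.

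No serious obstacle is expected here; the only nontrivial ingredient is the stabilization argument in the direct-limit step, which relies essentially on the finite generation of primes in $R$.
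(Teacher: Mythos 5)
Your proof is correct, and the overall skeleton matches the paper's: both reduce the nontrivial inclusion $\vass M \subseteq \ass M$ to the fact that passing to submodules and direct limits cannot introduce new associated primes (you phrase this via the class $\mathcal{D}=\{N \mid \ass N \subseteq \ass M\}$, the paper via the class of modules not having $\mathfrak{p}$ as an associated prime; these are the same reduction). Where you genuinely differ is in the proof of the direct-limit step. The paper invokes the pure epimorphism $\bigoplus_{i} L_i \to \varinjlim_i L_i$ and uses that $R/\mathfrak{p}$ is finitely presented over a noetherian ring, so the embedding $R/\mathfrak{p}\hookrightarrow \varinjlim L_i$ factors through the direct sum, landing $\mathfrak{p}$ in $\ass L_i$ for some $i$; noetherianness enters through finite presentability of $R/\mathfrak{p}$. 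You instead give an elementary element-wise argument: writing $\mathfrak{p}=\ann(x)$, lifting $x$ to the system and showing $\mathfrak{p}=\bigcup_{j\geq i}\ann(y_j)$ is a directed union that stabilizes because $\mathfrak{p}$ is finitely generated, so $\mathfrak{p}=\ann(y_j)$ for some $j$ (and then $y_j\neq 0$ automatically since $\mathfrak{p}\neq R$, so $R/\mathfrak{p}$ embeds in $N_j$). Your route avoids the purity machinery cited from the literature and is self-contained, at the cost of a slightly longer computation; the paper's route is shorter given the standard fact that every direct limit is a pure quotient of the direct sum, and it also implicitly uses that $\ass$ of a direct sum is the union of the $\ass$'s. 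Both are perfectly valid proofs of the lemma.
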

\begin{proof}
		Let $\mathfrak{p} \in \vass M$ and let us show that $\mathfrak{p} \in \ass M$. By definition, we have that $R/\mathfrak{p} \in \SubLim(\{M\})$. Since taking submodules does not introduce any new associated primes, it is enough to show that whenever $L$ is a direct limit of a direct system $L_i, i \in I$ such that $\mathfrak{p} \not\in \ass L_i$ for each $i \in I$, then $\mathfrak{p} \not\in \ass L$. Using \cite[Corollary 2.9]{GT}, there is a pure epimorphism $\pi: \bigoplus_{i \in I} L_i \rightarrow L$. Since $R$ is noetherian, the module $R/\mathfrak{p}$ is finitely presented, and thus we can factorize the inclusion $R/\mathfrak{p} \xhookrightarrow{} L$ through $\pi$. Therefore, $R/\mathfrak{p} \in \ass \bigoplus_{i \in I}L_i$. This already shows that there is $i \in I$, such that $\mathfrak{p} \in L_i$, a contradiction. We showed that $\vass M \subseteq \ass M$; the inverse inclusion is trivially true.
\end{proof}
\begin{lem}
	\label{L01}
	If $M$ is non-zero, then $\vass M$ is non-empty. Also, $\vass M \subseteq \supp M$.
\end{lem}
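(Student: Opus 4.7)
My plan is to handle the two assertions separately, starting with the inclusion $\vass M \subseteq \supp M$, which is cleaner. The key observation is that for a fixed prime $\mathfrak{p}$, the class $\mathcal{K}_\mathfrak{p}=\{N \in \ModR : N_\mathfrak{p} = 0\}$ is closed under submodules (by flatness of the localization $R \to R_\mathfrak{p}$) and under direct limits (since localization commutes with colimits). Hence if $M_\mathfrak{p} = 0$, then $\SubLim(\{M\}) \subseteq \mathcal{K}_\mathfrak{p}$, and $\mathfrak{p} \in \vass M$ would force $(R/\mathfrak{p})_\mathfrak{p} = 0$; but $(R/\mathfrak{p})_\mathfrak{p}$ is the field of fractions of the integral domain $R/\mathfrak{p}$, hence nonzero. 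So $\mathfrak{p} \in \vass M$ implies $\mathfrak{p} \in \supp M$.

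For the non-emptiness statement, I pick any $0 \neq x \in M$ and let $I = \ann(x)$, a proper ideal with $R/I$ a submodule of $M$; in particular $R/I \in \SubLim(\{M\})$. The goal is to find a prime $\mathfrak{p} \supseteq I$ with $R/\mathfrak{p} \in \SubLim(\{M\})$, and I apply Zorn's lemma to the poset $\Sigma = \{J : I \subseteq J \subsetneq R,\ R/J \in \SubLim(\{M\})\}$ ordered by inclusion. $\Sigma$ is nonempty since $I \in \Sigma$. For any chain $(J_\alpha)_\alpha$ in $\Sigma$, the union $J = \bigcup_\alpha J_\alpha$ is still a proper ideal (otherwise $1$ would lie in some $J_\alpha$, forcing $J_\alpha = R$), and $R/J = \varinjlim_\alpha R/J_\alpha$ belongs to $\SubLim(\{M\})$ by closure under direct limits, so $J \in \Sigma$.

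Let $\mathfrak{p}$ be a maximal element of $\Sigma$; the remaining task is to show $\mathfrak{p}$ is prime, and here the closure of $\SubLim$ under submodules is essential. Assume for contradiction that $\mathfrak{p}$ is not prime; then there is $a \in R \setminus \mathfrak{p}$ with $(\mathfrak{p} : a) \supsetneq \mathfrak{p}$. The ideal $(\mathfrak{p}:a)$ is proper (else $a \in \mathfrak{p}$) and contains $I$, and the map $R/(\mathfrak{p}:a) \to R/\mathfrak{p}$, $r + (\mathfrak{p}:a) \mapsto ra + \mathfrak{p}$, is a well-defined injective homomorphism, exhibiting $R/(\mathfrak{p}:a)$ as a submodule of $R/\mathfrak{p}$. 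Therefore $R/(\mathfrak{p}:a) \in \SubLim(\{M\})$ and $(\mathfrak{p}:a) \in \Sigma$, contradicting maximality. I expect the only delicate point to be verifying that the chain union stays proper in the Zorn step, and spotting that $(\mathfrak{p}:a)$ is the right enlargement of $\mathfrak{p}$ to exploit closure under submodules — everything else is bookkeeping.
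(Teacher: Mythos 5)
Your proof is correct and follows essentially the same route as the paper: Zorn's Lemma applied to the proper ideals $J$ with $R/J \in \SubLim(\{M\})$ (using closure under direct limits for chains), with the maximal element shown to be prime via closure under submodules, and the support inclusion deduced from the fact that localization at $\mathfrak{p}$ preserves $\SubLim(\{M\})$ while $(R/\mathfrak{p})_\mathfrak{p}\neq 0$. You merely spell out the primality step (via the embedding $R/(\mathfrak{p}:a)\hookrightarrow R/\mathfrak{p}$) that the paper dismisses as ``easily seen,'' which is a welcome but not essentially different addition.
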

\begin{proof}
		Define 
		$$X=\{I \text{ ideal} \mid R \neq I \text{ and } R/I \in \SubLim(\{M\})\}.$$ 
		Since $M$ is non-zero, $X$ is non-empty. We claim that $X$ is inductive (with respect to inclusion). Indeed, let $\mathfrak{c}$ be an increasing chain of ideals from $X$ and put $I=\bigcup \mathfrak{c}$. As $R \not\in \mathfrak{c}$, also $I \neq R$. The cyclic module $R/I$ can be obtained as a direct limit of the modules $R/J, J \in \mathfrak{c}$, from $\SubLim(\{M\})$. Then $R/I$ is an element of $\SubLim(\{M\})$, and thus $I \in X$, proving that $X$ is inductive.

		We can thus use Zorn's Lemma to find a maximal element $\mathfrak{p}$ of $X$, which is easily seen to be prime, hence $\mathfrak{p} \in \vass M$.

		Finally, any $\mathfrak{p} \in \vass(M)$ has to be in the support of $M$, because $R/\mathfrak{p} \in \SubLim(\{M\})$, and the localization functor $- \otimes_R R_\mathfrak{p}$ commutes with submodules and direct limits.				
\end{proof}
\begin{lem}
		For a finitely generated module $M$ and prime $\mathfrak{p}$, $\Hom_R(M, R/\mathfrak{p}) \neq 0$ iff $\mathfrak{p} \in \supp M$.
\end{lem}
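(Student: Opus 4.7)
The plan is to prove the two implications separately, using standard commutative algebra. For the $(\Rightarrow)$ direction, suppose $\phi \in \Hom_R(M,R/\mathfrak{p})$ is nonzero. Since $R/\mathfrak{p}$ is a domain, every nonzero element of $R/\mathfrak{p}$ has annihilator exactly $\mathfrak{p}$, so the nonzero submodule $\phi(M) \subseteq R/\mathfrak{p}$ satisfies $\ann \phi(M) = \mathfrak{p}$. Then $\ann M \subseteq \ann\phi(M) = \mathfrak{p}$, and invoking the fact (recalled earlier in the paper) that $\supp M = V(\ann M)$ for finitely generated $M$, we conclude $\mathfrak{p} \in \supp M$.

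For the $(\Leftarrow)$ direction, assume $\mathfrak{p} \in \supp M$, so $M_\mathfrak{p} \neq 0$. Since $M_\mathfrak{p}$ is finitely generated over the local ring $R_\mathfrak{p}$, Nakayama's lemma gives $M_\mathfrak{p}/\mathfrak{p} M_\mathfrak{p} \neq 0$. This is a nonzero finite-dimensional $k(\mathfrak{p})$-vector space, where $k(\mathfrak{p}) = R_\mathfrak{p}/\mathfrak{p} R_\mathfrak{p}$. Pick a generator $m \in M$ whose image $\overline{m/1}$ in this quotient is nonzero, and choose a $k(\mathfrak{p})$-linear functional $\psi: M_\mathfrak{p}/\mathfrak{p} M_\mathfrak{p} \to k(\mathfrak{p})$ with $\psi(\overline{m/1}) \neq 0$. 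Composing with the canonical maps $M \to M_\mathfrak{p} \twoheadrightarrow M_\mathfrak{p}/\mathfrak{p} M_\mathfrak{p}$ produces a nonzero $R$-linear homomorphism $\phi: M \to k(\mathfrak{p})$.

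It remains to replace the target $k(\mathfrak{p}) = \mathrm{Frac}(R/\mathfrak{p})$ by $R/\mathfrak{p}$ itself. Since $M$ is finitely generated, the image $\phi(M)$ is a finitely generated $R/\mathfrak{p}$-submodule of $k(\mathfrak{p})$; writing generators as fractions $a_i/b_i$ over the domain $R/\mathfrak{p}$ and letting $s = b_1 \cdots b_k \neq 0$, we obtain $s\phi(M) \subseteq R/\mathfrak{p}$. Then $s\phi: M \to R/\mathfrak{p}$ is still nonzero, since $s\phi(m) = s\psi(\overline{m/1}) \neq 0$ in the field $k(\mathfrak{p})$ (nonzero scalar multiplication being injective). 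The only mildly delicate step is ensuring the pulled-back functional is genuinely nonzero on $M$, which is handled by choosing the functional $\psi$ with respect to a chosen generator as above; otherwise the proof is routine.
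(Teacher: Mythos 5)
Your proof is correct, and both implications are argued somewhat differently from the paper. For ($\Rightarrow$) you compute annihilators: $\ann M \subseteq \ann \phi(M) = \mathfrak{p}$ and then invoke $\supp M = \{\mathfrak{p} \mid \ann M \subseteq \mathfrak{p}\}$ for finitely generated $M$; the paper instead argues by contradiction that if $\mathfrak{p} \notin \supp M$ the image of $\phi$ would be a non-zero torsion $R/\mathfrak{p}$-submodule of the domain $R/\mathfrak{p}$. Both are fine (your version uses finite generation here, which the paper's forward direction does not need, but that is harmless since it is hypothesized). For ($\Leftarrow$) both proofs start from Nakayama to get $M_\mathfrak{p}/\mathfrak{p}M_\mathfrak{p} \neq 0$, but then diverge: the paper passes to the torsion-free quotient of $M/\mathfrak{p}M$ over the domain $R/\mathfrak{p}$ and cites the fact that a finitely generated torsion-free module over a domain embeds into a finite product of copies of the ring (\cite[Lemma 16.1]{GT}), whereas you take a $k(\mathfrak{p})$-linear functional on the fibre, pull it back to a non-zero map $M \to k(\mathfrak{p})$, and clear denominators to land in $R/\mathfrak{p}$. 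Your route is more self-contained (no external embedding lemma, only linear algebra over the residue field plus the observation that multiplication by a non-zero element of $R/\mathfrak{p}$ is injective on $k(\mathfrak{p})$), at the cost of a slightly longer verification; the paper's is shorter given the cited lemma. The one delicate point in your argument --- that the pulled-back functional is genuinely non-zero on $M$ --- is correctly handled by choosing a generator of $M$ with non-zero image in $M_\mathfrak{p}/\mathfrak{p}M_\mathfrak{p}$ and a functional not vanishing on it.
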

\begin{proof}
	Suppose first that there is a non-zero map $f: M \rightarrow R/\mathfrak{p}$. If $\mathfrak{p} \not\in \supp M$, then $R/\mathfrak{p}$ contains a non-zero $R/\mathfrak{p}$-torsion submodule, a contradiction.

	Let $\mathfrak{p} \in \supp M$. Consider the quotient $M/\mathfrak{p}M$. Localizing at $\mathfrak{p}$ we obtain $M_\mathfrak{p}/(\mathfrak{p}_\mathfrak{p} M_\mathfrak{p})$. As $M_\mathfrak{p}$ is a non-zero finitely generated $R_\mathfrak{p}$-module, the latter quotient is non-zero by Nakayama. It follows that the torsion-free quotient of $M/\mathfrak{p}M$ (considered now as a module over the integral domain $R/\mathfrak{p}$) is non-zero. This module is well-known to embed into a finite product of $R/\mathfrak{p}$ (see \cite[Lemma 16.1]{GT}). Hence, $\Hom_R(M,R/\mathfrak{p})$ is non-zero as claimed.
\end{proof}

\begin{prop}
	\label{P01}
	Any 1-cotilting class of cofinite type is closed under taking injective envelopes. That is, any 1-cotilting torsion pair is hereditary.
\end{prop}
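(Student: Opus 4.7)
The plan is to use Corollary~\ref{C01} together with the theory of vaguely associated primes set up just above. By Corollary~\ref{C01}, one has $\mathcal{C} = \bigcap_{S \in \mathcal{S}} \Ker \Hom_R(S^\dagger, -)$ with each $S^\dagger$ finitely presented (Lemma~\ref{L03}). Since the intersection of classes closed under injective envelopes is again closed under injective envelopes, it suffices to prove the following purely support-theoretic assertion: for every finitely presented module $N$ over a commutative ring $R$, the class $\Ker \Hom_R(N, -)$ is closed under injective envelopes. Notably, the hypothesis $(S^\dagger)^* = 0$ will not be needed for this reduction.

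The pivotal observation is that, for such $N$, the class $\Ker \Hom_R(N, -)$ is closed under the operator $\SubLim$: under submodules by left-exactness of $\Hom_R(N, -)$, and under direct limits because $N$ is finitely presented. Consequently, for any $M \in \Ker \Hom_R(N, -)$ and any $\mathfrak{p} \in \vass M$, the inclusion $R/\mathfrak{p} \in \SubLim(\{M\}) \subseteq \Ker \Hom_R(N, -)$ forces $\Hom_R(N, R/\mathfrak{p}) = 0$, which by the lemma immediately preceding the proposition yields $\mathfrak{p} \notin \supp N$. In other words, $M \in \Ker \Hom_R(N, -)$ entails $\vass M \cap \supp N = \emptyset$.

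Now fix $M \in \Ker \Hom_R(N, -)$ and suppose, toward a contradiction, that some $f \colon N \to E(M)$ is non-zero. Essentiality of the extension $M \subseteq E(M)$ forces $K := f(N) \cap M$ to be a non-zero submodule of $M$. Moreover, $K$ is a quotient of $f^{-1}(M) \subseteq N$ and hence a subquotient of $N$, yielding $\supp K \subseteq \supp N$ by exactness of localization. Since $K \subseteq M$, monotonicity of $\vass$ gives $\vass K \subseteq \vass M$. Lemma~\ref{L01} then guarantees that $\vass K$ is non-empty and contained in $\supp K$; picking any $\mathfrak{p} \in \vass K$ produces a prime in $\vass M \cap \supp N$, contradicting the conclusion of the previous paragraph. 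Hence $\Hom_R(N, E(M)) = 0$, as required.

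The main conceptual obstacle is that the classical bridge between $\Hom$-vanishing and support data proceeds via $\ass M$, whose non-emptiness for non-zero $M$ breaks down outside the noetherian world—as the footnote preceding the proposition recalls. The correct replacement, non-emptiness of $\vass M$ provided by Lemma~\ref{L01}, is precisely the ingredient that lets the essentiality argument produce the witnessing prime; once that is in hand, the rest is routine bookkeeping with support, localization, and the closure of $\Ker \Hom_R(N,-)$ under $\SubLim$.
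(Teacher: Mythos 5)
Your proof is correct, and it runs on the same engine as the paper's: Corollary~\ref{C01}, the non-emptiness of $\vass$ for non-zero modules together with $\vass \subseteq \supp$ (Lemma~\ref{L01}), the lemma characterizing $\Hom_R(N,R/\mathfrak{p})\neq 0$ via $\mathfrak{p}\in\supp N$, and an essentiality argument producing a witnessing prime. The packaging, however, differs in a useful way. The paper first reduces, via \cite[Lemma 1.3]{FM}, to showing $E(C)\in\mathcal{C}$ for a cotilting module $C$ cogenerating $\mathcal{C}$, and then, given a non-zero map $S^\dagger\to E(C)$, extracts a cyclic module $R/J\in\mathcal{C}$ with $J\supseteq \ann S^\dagger$ and uses $\supp S^\dagger = V(\ann S^\dagger)$ to reach the contradiction. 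You instead prove the cleaner and slightly more general statement that $\Ker\Hom_R(N,-)$ is closed under injective envelopes for \emph{every} finitely presented $N$ over a commutative ring, working with an arbitrary $M$ in the class, taking $K=f(N)\cap M$ and using $\supp K\subseteq\supp N$ (subquotient) plus $\vass K\subseteq \vass M$ (monotonicity of $\SubLim$); closure of the intersection under injective envelopes is then immediate. This buys you independence from the reduction to the cotilting module (no appeal to \cite[Lemma 1.3]{FM}) and makes explicit that the hypothesis $(S^\dagger)^*=0$ plays no role here, at the modest cost of having to note the closure of $\Ker\Hom_R(N,-)$ under submodules and direct limits, which the paper gets for free from $\mathcal{C}$ being a cotilting class of cofinite type. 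All the auxiliary steps you use (idempotence of $\SubLim$, exactness of localization for subquotients) are sound, so the argument stands as a valid alternative write-up of Proposition~\ref{P01}.
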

\begin{proof}
		Let $\mathcal{C}$ be a 1-cotilting class and let $C$ be a 1-cotilting module cogenerating $\mathcal{C}$. Using \cite[Lemma 1.3]{FM}, it is enough to show that $E(C) \in \mathcal{C}$. Since $\mathcal{C}$ is of cofinite type, there is a set $\mathcal{S}$ of finitely presented modules of projective dimension 1 such that $\mathcal{C}=\mathcal{S}^\intercal$. By Corollary~\ref{C01}, putting $\mathcal{E}=\{S^\dagger \mid S \in \mathcal{S}\}$ we get $\mathcal{C}=\bigcap_{M \in \mathcal{E}}\Ker \Hom_R(M,-)$. Suppose that there is a non-zero map $M \rightarrow E(C)$ for some $M \in \mathcal{E}$. Its image has to intersect $C$ non-trivially. It follows that there is an ideal $J \neq R$ containing $\ann M$ such that $R/J \in \mathcal{C}$. By Lemma~\ref{L01}, there is a prime $\mathfrak{p} \in \vass(R/J)$. Since $\mathcal{C}$ is closed under submodules and direct limits, we have that $R/\mathfrak{p} \in \mathcal{C}$. On the other hand, since $M$ is finitely generated and $J \subseteq \mathfrak{p}$, we have that $\mathfrak{p} \in \supp M$, and thus $\Hom_R(M,R/\mathfrak{p})\neq 0$ by Lemma~\ref{L02}. This is a contradiction.
\end{proof}
\begin{cor}
		\label{C20}
	Let $R$ be a commutative ring. Then 1-cotilting classes of cofinite type in $\ModR$ coincide with torsion-free classes of faithful hereditary torsion pairs of finite type.
\end{cor}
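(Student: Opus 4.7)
The plan is to prove the two inclusions separately, exploiting the fact that most of the heavy lifting has already been done in Proposition~\ref{P01}, Lemma~\ref{L77}, Lemma~\ref{L02}, and Corollary~\ref{C01}. The key conceptual point of the whole corollary is that the Auslander--Bridger transpose provides a dictionary that exchanges the $\Hom$-description (natural for hereditary torsion-free classes of finite type) with the $\tor$-description (natural for cotilting classes of cofinite type), and that the hypothesis of faithfulness is exactly what is needed to apply this dictionary.

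For the forward direction, I would start with a $1$-cotilting class $\mathcal{C}$ of cofinite type, induced by a cotilting module $C$. First I would observe that $\mathcal{C}$ is automatically a torsion-free class: being $^\perp C$ with $\id C \leq 1$, it is closed under submodules, direct products, and extensions. The induced torsion pair is faithful since $R \in \mathcal{C}$ (as $\ext^1_R(R,C)=0$ trivially), and hereditary by Proposition~\ref{P01}. For the finite type condition, I would use that $\mathcal{C} = \mathcal{S}^\intercal$ for some set $\mathcal{S}$ of finitely presented modules of projective dimension at most $1$; by Corollary~\ref{C01} we can rewrite $\mathcal{C} = \bigcap_{S \in \mathcal{S}} \Ker \Hom_R(S^\dagger, -)$ with each $S^\dagger$ finitely presented (Lemma~\ref{L03}), and then Lemma~\ref{L77} applies.

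For the converse, I would take a faithful hereditary torsion pair $(\mathcal{T},\mathcal{F})$ of finite type. Lemma~\ref{L77} provides a set $\mathcal{E}$ of finitely presented modules, which we may assume lies in $\mathcal{T}$, such that $\mathcal{F} = \bigcap_{E \in \mathcal{E}} \Ker \Hom_R(E,-)$. The crucial move is to notice that the faithfulness of the torsion pair, i.e.\ $R \in \mathcal{F}$, translates to $\Hom_R(E,R)=0$ for each $E \in \mathcal{E}$. This is exactly the hypothesis of Lemma~\ref{L02}, which gives that each $\tr E$ is a finitely presented module of projective dimension $1$, together with a natural isomorphism $\Hom_R(E,-) \simeq \tor_1^R(\tr E, -)$. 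Setting $\mathcal{S} = \{\tr E \mid E \in \mathcal{E}\}$, I obtain $\mathcal{F} = \mathcal{S}^\intercal$.

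To finish, I would invoke Theorem~\ref{T67}: since $\mathcal{S}$ is a set of finitely presented modules of projective dimension at most $1$, the class $\mathcal{S}^\perp$ is a $1$-tilting class, induced by some $1$-tilting module $T$. The duality recalled at the end of Section~2.4 then identifies $\mathcal{F} = \mathcal{S}^\intercal = {}^\perp(T^+)$ as a $1$-cotilting class of cofinite type (witnessed precisely by $\mathcal{S}$). The only genuine obstacle in the whole argument is the recognition that faithfulness is the correct hypothesis making the transpose dictionary go through; everything else is an assembly of the pre-existing lemmas.
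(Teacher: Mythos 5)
Your proposal is correct and follows essentially the same route as the paper: the forward direction is Corollary~\ref{C01} plus Proposition~\ref{P01} (with finite type read off from the $\Hom_R(S^\dagger,-)$ description), and the converse uses Lemma~\ref{L77} together with the observation that faithfulness forces $\Hom_R(E,R)=0$, so Lemma~\ref{L02} converts the $\Hom$-description into the $\tor$-description via the transpose. Your explicit appeal to the tilting--cotilting character duality to certify that $\mathcal{S}^\intercal$ is a $1$-cotilting class of cofinite type only spells out what the paper leaves implicit.
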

\begin{proof}
	Any 1-cotilting class of cofinite type is a torsion-free class of a faithful hereditary torsion-pair of finite type by Corollary~\ref{C01} and Proposition~\ref{P01}. 

A torsion-free class of a faithful hereditary torsion pair of finite type is of form $\Ker \Hom_R(\mathcal{S},-)$ for a set of finitely presented modules $\mathcal{S}$ by Lemma~\ref{L77}. Since the pair is faithful, we have $S^*=0$ for each $S \in \mathcal{S}$, and thus the torsion-free class equals $\{\tr S \mid S \in \mathcal{S}\}^\intercal$ by Lemma~\ref{L02}, proving that it is a 1-cotilting class of cofinite type.
\end{proof}
Given a module $M$, we let $\operatorname{Prod}(M)$ denote the class of all modules isomorphic to a direct summand of product of copies of $M$.
\begin{cor}
		\label{C30}
		Let $R$ be a commutative ring, and $\mathcal{C}$ a 1-cotilting class. Then the following conditions are equivalent:
		\begin{enumerate}
				\item $\mathcal{C}$ is of cofinite type,
				\item $\mathcal{C}$ is closed under injective envelopes,
				\item for any 1-cotilting module $C$ with $\mathcal{C}={}^\perp C$, we have $E(C) \in \operatorname{Prod}(C)$.
		\end{enumerate}
\end{cor}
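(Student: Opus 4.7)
The implication $(1) \Rightarrow (2)$ is exactly Proposition~\ref{P01}, so my plan is to close the cycle by establishing $(2) \Leftrightarrow (3)$ and $(2) \Rightarrow (1)$, using Corollary~\ref{C20} as the bridge between cofinite-type $1$-cotilting classes and faithful hereditary torsion pairs of finite type.

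For $(2) \Rightarrow (3)$, I would argue that any $1$-cotilting $C$ with $\mathcal{C} = {}^\perp C$ lies in $\mathcal{C}$ (since $\ext_R^1(C,C) = 0$), so $(2)$ forces $E(C) \in \mathcal{C}$; since $E(C)$ is injective, it lies in $\mathcal{C}^\perp$ for free, and the standard identity $\mathcal{C} \cap \mathcal{C}^\perp = \operatorname{Prod}(C)$ for a $1$-cotilting module $C$ (a general feature of the cotorsion pair $({}^\perp C, ({}^\perp C)^\perp)$) then places $E(C)$ in $\operatorname{Prod}(C)$. Conversely, $(3) \Rightarrow (2)$ is immediate: $(3)$ gives $E(C) \in \operatorname{Prod}(C) \subseteq \mathcal{C}$, and the reference \cite[Lemma~1.3]{FM} already invoked in Proposition~\ref{P01} upgrades $E(C) \in \mathcal{C}$ to closure of $\mathcal{C}$ under injective envelopes.

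The substantive implication is $(2) \Rightarrow (1)$. By Corollary~\ref{C20} it suffices to realize $\mathcal{C}$ as the torsion-free class of a faithful hereditary torsion pair of finite type. Three of the conditions come essentially for free: the torsion-pair structure, because $\mathcal{C} = {}^\perp C$ is automatically closed under submodules, products and extensions; faithfulness, because $R$ is projective and hence $R \in \mathcal{C}$; and heredity, which is precisely hypothesis $(2)$. The remaining — and only genuine — obstacle is closure of $\mathcal{C}$ under direct limits, i.e.\ the finite-type condition.

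For this final step I would invoke Bazzoni's theorem that every $1$-cotilting module is pure-injective. Given a directed system $(M_i)_{i \in I}$ in $\mathcal{C}$ with colimit $M$, the canonical pure-exact sequence $0 \to K \to \bigoplus_i M_i \to M \to 0$ combined with pure-injectivity of $C$ yields surjectivity of $\Hom_R(\bigoplus_i M_i, C) \to \Hom_R(K, C)$; plugging this together with $\ext_R^1(\bigoplus_i M_i, C) = \prod_i \ext_R^1(M_i, C) = 0$ into the long exact sequence for $\ext_R^1(-, C)$ forces $\ext_R^1(M, C) = 0$, so $M \in \mathcal{C}$. Corollary~\ref{C20} then completes the argument.
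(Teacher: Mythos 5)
Your proposal is correct and follows essentially the same route as the paper: $(1)\Rightarrow(2)$ is Proposition~\ref{P01}, $(2)\Rightarrow(3)$ uses $E(C)\in\mathcal{C}\cap\mathcal{C}^\perp=\operatorname{Prod}(C)$ (the paper cites \cite[Lemma 15.4]{GT} for this identity), and the remaining implication reduces to Corollary~\ref{C20} by checking that the induced torsion pair is faithful, hereditary, and of finite type. The only difference is cosmetic: you close the cycle via $(2)\Rightarrow(1)$ rather than $(3)\Rightarrow(1)$ and spell out the finite-type condition via Bazzoni's pure-injectivity of $1$-cotilting modules, a standard fact the paper leaves implicit when it asserts the torsion pair is of finite type.
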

\begin{proof}
		(1) $\Rightarrow$ (2): Proposition~\ref{P01}.

		(2) $\Rightarrow$ (3): Let $\mathcal{W}=\mathcal{C}^\perp$. Since $E(C)$ is injective, we have $E(C) \in \mathcal{C} \cap \mathcal{W}$, and $\mathcal{C} \cap \mathcal{W} = \operatorname{Prod}(C)$ by \cite[Lemma 15.4]{GT}.
		
		(3) $\Rightarrow$ (1): As $E(C) \in \mathcal{C}$, the class $\mathcal{C}$ is closed under injective envelopes by \cite[Lemma 1.3]{FM}. Then the induced torsion pair $(\mathcal{E},\mathcal{C})$ is faithful, hereditary, and of finite type, and thus $\mathcal{C}$ is of cofinite type by Corollary~\ref{C20}.
\end{proof}
Before classifying all 1-tilting classes, we distinguish the following two steps.
\begin{lem}
		\label{L06}
		Let $\mathcal{T}$ be a 1-tilting class and $J$ an ideal such that $M=JM$ for each $M \in \mathcal{T}$. Then there is a finitely generated ideal $I \subseteq J$ such that $M=IM$ for each $M \in \mathcal{T}$.
\end{lem}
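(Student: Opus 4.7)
The plan is to identify the collection of ideals $K \subseteq R$ satisfying $KM = M$ for every $M \in \mathcal{T}$ with the faithful finitely generated Gabriel topology $\mathcal{G}$ associated to the cotilting dual of $\mathcal{T}$, and then exploit that $\mathcal{G}$ automatically has a basis of finitely generated ideals.

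First I would set up the dual picture: by Theorem~\ref{T67}, $\mathcal{T} = \mathcal{S}^\perp$ for some resolving $\mathcal{S} \subseteq \modR$ with projective dimension at most one; by Corollary~\ref{C01} the class $\mathcal{C} = \mathcal{S}^\intercal$ is the 1-cotilting class of cofinite type dual to $\mathcal{T}$, and by Corollary~\ref{C20} there is a faithful hereditary torsion pair of finite type $(\mathcal{E},\mathcal{C})$. Let $\mathcal{G} = \{K \mid R/K \in \mathcal{E}\}$ be the associated Gabriel topology; since the pair is of finite type, $\mathcal{G}$ has a basis of finitely generated ideals. The standard character-module identity $\ext^i_R(S,N^+) \cong \tor^R_i(S,N)^+$ for $S$ finitely presented shows that $(-)^+$ sends $\mathcal{T}$ into $\mathcal{C}$ and $\mathcal{C}$ into $\mathcal{T}$.

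The crux of the argument is the equivalence: $KM = M$ for every $M \in \mathcal{T}$ if and only if $K \in \mathcal{G}$. Granting this equivalence, the lemma follows at once, since the hypothesis supplies $J \in \mathcal{G}$, and finite generation of $\mathcal{G}$ then yields a finitely generated $I \in \mathcal{G}$ with $I \subseteq J$ which satisfies $IM = M$ for all $M \in \mathcal{T}$. The ``$\Leftarrow$'' direction is formal: assuming $K \in \mathcal{G}$ one has $\Hom_R(R/K,C) = 0$ for every $C \in \mathcal{C}$, and specializing $C = M^+$ for $M \in \mathcal{T}$ yields $(M/KM)^+ \cong \Hom_R(R/K,M^+) = 0$, hence $KM = M$.

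The only non-formal ingredient, which I expect to be the main subtlety, is the ``$\Rightarrow$'' direction: from $K \notin \mathcal{G}$ I must produce $M \in \mathcal{T}$ with $KM \neq M$. I plan to take the $\mathcal{E}$-torsion-free quotient $\overline{R/K}$, which is a nonzero module in $\mathcal{C}$; Lemma~\ref{L01} then yields a prime $\mathfrak{p} \in \vass(\overline{R/K}) \subseteq V(K)$, and the closure of $\mathcal{C}$ under submodules and direct limits gives $R/\mathfrak{p} \in \mathcal{C}$. Setting $M = (R/\mathfrak{p})^+$, which lies in $\mathcal{T}$ by the character-module correspondence above, the surjection $R/K \twoheadrightarrow R/\mathfrak{p}$ reduces the task of showing $M \otimes_R R/K \neq 0$ to showing $(R/\mathfrak{p})^+ \otimes_R R/\mathfrak{p} \neq 0$. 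Applying $(-)^+$ transforms the latter into $\Hom_R(R/\mathfrak{p}, (R/\mathfrak{p})^{++})$, which is nonzero because the canonical evaluation $R/\mathfrak{p} \hookrightarrow (R/\mathfrak{p})^{++}$ is a monomorphism; this contradicts the hypothesis and completes the proof.
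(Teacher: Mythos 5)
Your argument is correct, but it takes a genuinely different and much heavier route than the paper. The paper's proof is a short, self-contained computation on the tilting side: pick a $1$-tilting module $T$ with $\mathcal{T}=T^\perp=\gen(T)$, use closure of $\mathcal{T}$ under direct products to get $T^T=JT^T$, and apply this to the ``diagonal'' element ${\bf t}=(t)_{t\in T}\in T^T$; writing ${\bf t}$ as a finite combination of elements of $JT^T$ produces a finitely generated $I\subseteq J$ with ${\bf t}\in IT^T$, hence $T=IT$ coordinatewise, and $\gen(T)=\mathcal{T}$ finishes the proof. You instead pass to the cotilting dual and prove the stronger statement that $\{K\mid KM=M\ \text{for all }M\in\mathcal{T}\}$ coincides with the Gabriel topology of the faithful hereditary torsion pair of finite type attached to $\mathcal{C}$ via Corollary~\ref{C20}; your ``$\Leftarrow$'' via $\Hom_R(R/K,M^+)\cong (M/KM)^+$ and your ``$\Rightarrow$'' via Lemma~\ref{L01}, closure of $\mathcal{C}$ under submodules and direct limits, and the test module $(R/\mathfrak{p})^+$ are all sound, and nothing you invoke (Theorem~\ref{T67}, Corollary~\ref{C01}, Proposition~\ref{P01}, Corollary~\ref{C20}, Lemma~\ref{L01}) depends on this lemma, so there is no circularity. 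Two remarks: first, your claim that a hereditary torsion pair of finite type has a Gabriel topology with a basis of finitely generated ideals is asserted rather than proved; it is standard and also follows by specializing the direct-limit argument in the proof of Lemma~\ref{L77} to cyclic modules, but it deserves a line. Second, what your approach buys is essentially a fragment of Theorem~\ref{T01} (the identification of $\Psi(\mathcal{T})$ with the dual Gabriel topology), anticipating the bijections proved later; what the paper's approach buys is brevity and independence from the cotilting machinery, which matters since this lemma is itself an ingredient in the proof of the main theorem.
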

\begin{proof}
		Let $T$ be a 1-tilting module such that $\mathcal{T}=T^\perp=\gen(T)$. Since $\mathcal{T}$ is closed under direct products, we have that $T^T=JT^T$. Let ${\bf t}=(t)_{t \in T} \in T^T$ be the sequence of all elements of $T$. Since ${\bf t} \in T^T=JT^T$, there is a finitely generated ideal $I \subseteq J$ such that ${\bf t} \in IT^T$. Looking at the canonical projections, we infer that $t \in IT$ for each $t \in T$, showing that $T=IT$. But since $T$ generates $\mathcal{T}$, this means that $\mathcal{T} \subseteq \{M \in \ModR \mid M=IM\}$ as claimed.
\end{proof}
\begin{lem}
		\label{L05}
	Let $S$ be a finitely presented module of projective dimension 1. Then there is a finitely generated ideal $I$ such that $S^\perp=\{I\}\Div=\{M \in \ModR \mid M=IM\}$.
\end{lem}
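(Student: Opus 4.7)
The plan is to build $I$ explicitly from a free resolution of $S$. Since $\pd_R S \le 1$ and $S$ is finitely presented, I would fix a resolution $0 \to R^n \xrightarrow{A} R^m \to S \to 0$ with $A$ injective (hence $n \le m$ after localizing at any prime), and set $I := I_n(A)$, the finitely generated ideal of $n \times n$ minors of the $m \times n$ matrix $A$. By Corollary~\ref{C01}, membership $M \in S^\perp$ is equivalent to surjectivity of $A^T \otimes_R M \colon M^m \to M^n$, since $S^\perp = \Ker(- \otimes_R S^\dagger)$ and $S^\dagger$ is presented by $R^m \xrightarrow{A^T} R^n \to S^\dagger \to 0$.

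The easy inclusion $\{I\}\Div \subseteq S^\perp$ follows from Cramer's rule: one has $I \cdot R^n \subseteq A^T(R^m)$, so after tensoring with $M$ the image of $A^T \otimes M$ contains $I M^n$, and the assumption $M = IM$ forces this image to be all of $M^n$. For the reverse inclusion $S^\perp \subseteq \{I\}\Div$, the strategy is to prove the chain
\begin{equation*}
M = I_0(A) M = I_1(A) M = \cdots = I_{n-1}(A) M = I_n(A) M,
\end{equation*}
with the convention $I_0(A) = R$, where $I_k(A)$ denotes the ideal of $k \times k$ minors of $A$. The inclusions $I_k(A) M \subseteq I_{k-1}(A) M$ are automatic from $I_k(A) \subseteq I_{k-1}(A)$, so the substance is to show $I_{k-1}(A) M \subseteq I_k(A) M$ for each $k = 1, \ldots, n$.

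For fixed $k$, given a $(k-1) \times (k-1)$ minor $\mu$ of $A$ with row set $R_\mu$ and column set $C_\mu$, pick $i^* \in \{1, \ldots, n\} \setminus C_\mu$ and $j^* \in \{1, \ldots, m\} \setminus R_\mu$ (both available since $k-1 < n \le m$), and set $R^+ = R_\mu \cup \{j^*\}$, $C^+ = C_\mu \cup \{i^*\}$. For any $y \in M$, surjectivity of $A^T \otimes M$ supplies $x \in M^m$ solving $A^T x = y \mathbf{e}_{i^*}$; isolating the $k$ equations indexed by $i \in C^+$ and the $k$ unknowns $\{x_j \colon j \in R^+\}$ gives a $k \times k$ Cramer system whose coefficient matrix has determinant a $k \times k$ minor of $A$. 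Cofactor expansion along the replaced column produces an identity of the shape
\begin{equation*}
(\text{$k \times k$ minor of } A) \cdot x_{j^*} = \pm\, y\mu \; + \sum_{j \notin R^+} (\text{$k \times k$ minor of } A) \cdot x_j,
\end{equation*}
whose left-hand side and every summand on the right lie in $I_k(A) M$. Hence $y\mu \in I_k(A) M$, and letting $\mu$ range over all $(k-1) \times (k-1)$ minors of $A$ yields $I_{k-1}(A) M \subseteq I_k(A) M$.

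The main obstacle will be the careful bookkeeping of signs and index sets in the cofactor expansion, to verify that the coefficient of $y$ is (up to sign) precisely $\mu$ and that each coefficient of $x_j$ for $j \notin R^+$ genuinely is a $k \times k$ minor of $A$ (namely the one built from rows $(R^+ \setminus \{j^*\}) \cup \{j\}$ and columns $C^+$). Once this identity is secured, chaining the equalities — the base case $k = 1$ being just the $i^*$-th component of $A^T x = y\mathbf{e}_{i^*}$ — produces $M = I_n(A) M = IM$, completing the proof.
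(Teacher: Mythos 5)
Your argument is correct, and it is genuinely different from the paper's proof. The paper never exhibits the ideal explicitly: it first produces a possibly infinitely generated ideal $J$ with $S^\perp=\{J\}\Div$ by passing to the dual cotilting module $C=T^+$, filtering $S^\dagger$ by cyclic modules $R/J_i$, and invoking Proposition~\ref{P01} (closure of cofinite-type cotilting classes under injective envelopes, which rests on the machinery of vaguely associated primes) to get $\Hom_R(R/J_i,E(C))=0$, hence $T=J_iT$ and $T=JT$ for $J=J_1\cdots J_n$; finite generation is then recovered separately via the compactness argument of Lemma~\ref{L06}. You instead take $I=I_n(A)=\operatorname{Fitt}_0(S^\dagger)$ and prove both inclusions by pure linear algebra: the inclusion $\{I\}\Div\subseteq S^\perp$ by the adjugate identity, and $S^\perp\subseteq\{I\}\Div$ by the chain $M=I_0(A)M=\cdots=I_n(A)M$, where each step is the cofactor identity you describe; I checked that identity (the coefficient of $y$ is indeed $\pm\mu$, and each coefficient of $x_j$, $j\notin R^+$, is up to sign the $k\times k$ minor on rows $R_\mu\cup\{j\}$ and columns $C^+$, so the bookkeeping you defer is routine), and the index choices are available because injectivity of $A$ forces $n\le m$ (for $R\neq 0$; the zero ring is vacuous). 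Your route buys several things: it is elementary and self-contained, needing only Corollary~\ref{C01} and neither Proposition~\ref{P01} nor Lemma~\ref{L06}; it identifies the ideal concretely as the ideal of maximal minors of a presentation matrix, i.e.\ $\operatorname{Fitt}_0(\ext^1_R(S,R))$; and it actually proves the stronger general statement that for any finitely presented module $N$ the class $\Ker(-\otimes_R N)$ equals $\{\operatorname{Fitt}_0(N)\}\Div$. What the paper's route buys is economy within its own narrative: Proposition~\ref{P01} and Lemma~\ref{L06} are needed for the main theorem anyway, so the author's proof reuses them instead of introducing determinantal calculus.
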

\begin{proof}
		We first show that there is an ideal $J$ such that $S^\perp=\{J\}\Div$. By Corollary~\ref{C01}, $\mathcal{T}=\Ker - \otimes_R S^\dagger$. Also let $\mathcal{C}=\Ker \Hom_R(S^\dagger,-)$ be the induced 1-cotilting class and let $C=T^+$ be the 1-cotilting module dual to $T$. Let us fix a filtration $0=M_0 \subseteq M_1 \subseteq \cdots \subseteq M_n=S^\dagger$ of $S^\dagger$ by cyclic modules, that is, such that there is an ideal $J_i$ with $M_{i+1}/M_i \simeq R/J_i$ for each $i=0,\ldots,n-1$. We have shown in Proposition~\ref{P01} that $E(C) \in \mathcal{C}$, and thus $\Hom_R(S^\dagger,E(C))=0$. Since the functor $\Hom_R(-,E(C))$ is exact, it follows that $\Hom_R(R/J_i,E(C))=0$ for each $i=1,\ldots,n$, and thus also $\Hom_R(R/J_i,C)=0$ for each $i=1,\ldots,n$. Using the standard isomorphism $(R/J_i \otimes_R T)^+ \simeq \Hom_R(R/J_i,T^+)$, we get that $(R/J_i \otimes_R T)^+=0$, and thus $R/J_i \otimes_R T=0$. In other words, $T=J_iT$ for each $i=1,\ldots,n$, and thus $T=JT$, where we put $J=J_1J_2\cdots J_n$. We have proved that $\mathcal{T} \subseteq \{J\}\Div$. The other inclusion follows easily, as $\mathcal{S}^\dagger$ is filtered by $\{R/J_i \mid i=1,\ldots,n\}$.

		Since $S^\perp$ is a 1-tilting class, by Lemma~\ref{L06} there is a finitely generated ideal $I \subseteq J$ such that $S^\perp \subseteq \{I\}\Div$. The latter inclusion must be an equality, because $\{I\}\Div \subseteq \{J\}\Div=S^\perp$.
\end{proof}
\subsection{Main theorem}
\begin{thm}
	\label{T01}
	Let $R$ be a commutative ring. There are bijections between the following collections:
	\begin{enumerate}
			\item 1-tilting classes $\mathcal{T}$,
			\item 1-cotilting classes of cofinite type $\mathcal{C}$,
			\item faithful finitely generated Gabriel topologies $\mathcal{G}$,
			\item Thomason subsets $X$ of $\spec(R) \setminus \vass(R)$,
			\item faithful hereditary torsion pairs $(\mathcal{E},\mathcal{F})$ of finite type in $\ModR$,
			\item resolving subcategories of $\modR$ consisting of modules of projective dimension at most 1.
	\end{enumerate}
	The bijections are given as follows:
	\begin{center}
	\begin{tabular}{ | l | l | }
			\hline
			Bijection & Formula \\
			\hline
			(1) $\rightarrow$ (2) & $\mathcal{T} \mapsto (^\perp\mathcal{T} \cap \modR)^\intercal$ \\
			(1) $\rightarrow$ (3) & $\Psi: \mathcal{T} \mapsto \{I \text{ ideal} \mid M=IM \text{ for all $M \in \mathcal{T}$}\}$ \\
			(3) $\rightarrow$ (1) & $\Phi: \mathcal{G} \mapsto \mathcal{G}\Div=(\bigoplus_{I \in \mathcal{G} \text{, $I$ f.g.}}\tr(R/I))^\perp$ \\
			(3) $\rightarrow$ (4) & $\Xi: \mathcal{G} \mapsto \mathcal{G} \cap \spec(R)$ \\
			(4) $\rightarrow$ (3) & $\Theta: X \mapsto  \{J \text{ ideal} \mid \exists I \subseteq J \text{ finitely generated such that $V(I) \subseteq X$}\}$ \\
			(5) $\rightarrow$ (2) & $(\mathcal{E},\mathcal{F}) \mapsto \mathcal{F}$ \\
			(2) $\rightarrow$ (4) & $\mathcal{C} \mapsto (\spec(R) \setminus \ass \mathcal{C})$ \\
			(3) $\rightarrow$ (2) & $\mathcal{G} \mapsto \{M \in \ModR \mid \ann(m) \not\in \mathcal{G} \text{ for all non-zero $m \in M$}\}$ \\
			(3) $\rightarrow$ (6) & $\mathcal{G} \mapsto \mathcal{S}=\{M \in \modR \mid M \text{ is isomorphic to a direct summand}$\\ 
								  & of a finitely $\{R\}\cup\{\tr(R/I) \mid I \in \mathcal{G} \text{ f.g.}\}$-filtered module$\}$ \\
			\hline
	\end{tabular}
	\end{center}
\end{thm}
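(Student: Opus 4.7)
The strategy is to assemble the bijections from tools already in place and then verify the explicit formulas. Three equivalences are immediate: (1)$\leftrightarrow$(6) is Theorem~\ref{T67}, (2)$\leftrightarrow$(5) is Corollary~\ref{C20}, and (1)$\leftrightarrow$(2) was recalled at the end of the preliminaries via the character-module duality $T \mapsto T^+$. What remains is to establish (3)$\leftrightarrow$(5), to establish (3)$\leftrightarrow$(4) with the faithfulness condition rephrased as avoidance of $\vass(R)$, to show that $\Psi$ and $\Phi$ are mutually inverse well-defined bijections between (1) and (3), and to verify that the remaining table entries agree with the composed bijections.

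For (3)$\leftrightarrow$(5) the classical Gabriel correspondence recalled in \S2.2 matches hereditary torsion pairs with Gabriel topologies. The refinement ``finite type $\leftrightarrow$ finitely generated'' is exactly the content of Lemma~\ref{L77}, while ``faithful $\leftrightarrow$ faithful'' is direct: $R \in \mathcal{F}$ means no nonzero $r \in R$ has $\ann(r) \in \mathcal{G}$, which is equivalent to $\ann(I) = 0$ for every $I \in \mathcal{G}$, and hence to the existence of an f.g.\ basis of faithful ideals. For (3)$\leftrightarrow$(4) the maps $\Xi, \Theta$ form a bijection by the standard Thomason correspondence. To match faithfulness with $X \cap \vass R = \emptyset$: if $\mathcal{G}$ is faithful, the corresponding $\mathcal{F}$ contains $R$ and is closed under direct limits and submodules, so $\SubLim(\{R\}) \subseteq \mathcal{F}$, and any $\mathfrak{p} \in \mathcal{G} \cap \vass R$ would place $R/\mathfrak{p}$ in $\mathcal{T} \cap \mathcal{F} = 0$. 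Conversely, if $X \cap \vass R = \emptyset$ and $0 \neq r \in \ann(I)$ for an f.g.\ $I \subseteq J \in \Theta(X)$ with $V(I) \subseteq X$, then $Rr \simeq R/\ann(r) \in \SubLim(\{R\})$, and Lemma~\ref{L01} produces a prime in $\vass R \cap V(\ann r) \subseteq \vass R \cap X$, a contradiction.

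For (1)$\leftrightarrow$(3), Lemma~\ref{L02} applied to each f.g.\ $I \in \mathcal{G}$ (faithful, so $(R/I)^* = 0$) gives $\tr(R/I)^\perp = \Ker(- \otimes_R R/I) = \{I\}\Div$, hence $\Phi(\mathcal{G}) = \mathcal{G}\Div = (\bigoplus \tr(R/I))^\perp$ is a 1-tilting class. In the other direction, $\Psi(\mathcal{T})$ is a filter closed under products ($M = IM = JM \Rightarrow M = IJM$), has an f.g.\ basis by Lemma~\ref{L06}, and so is an f.g.\ Gabriel topology by Lemma~\ref{L233}; faithfulness follows from the tilting presentation $0 \to R \to T_0 \to T_1 \to 0$ and the equality $T_0 = IT_0$, which forces $rT_0 = 0$ for every $r \in \ann(I)$ and hence $r = 0$. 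The identity $\Phi \circ \Psi = \mathrm{id}$ follows by combining Theorem~\ref{T67} with Lemma~\ref{L05}: the latter yields, for each $S$ in the associated resolving subcategory, an f.g.\ $I_S \in \Psi(\mathcal{T})$ with $S^\perp = \{I_S\}\Div$, whence $\mathcal{T} = \bigcap_S \{I_S\}\Div \supseteq \Phi(\Psi(\mathcal{T})) \supseteq \mathcal{T}$.

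The main obstacle is $\Psi \circ \Phi = \mathrm{id}$. The nontrivial inclusion, after reducing to the f.g.\ case via Lemma~\ref{L06}, amounts to showing $R/J \in \mathcal{T}_h$ in the hereditary torsion pair of $\mathcal{G}$. The plan is to dualize: let $T$ be a 1-tilting module with $T^\perp = \Phi(\mathcal{G})$ and $C = T^+$ the associated 1-cotilting module of cofinite type, so $\mathcal{F}_h = {}^\perp C$ by Corollary~\ref{C20}. The hypothesis gives $T = JT$, hence $T \otimes_R R/J = 0$ and, by Hom-tensor adjunction, $\Hom_R(R/J, C) = 0$. Since $E(C) \in \operatorname{Prod}(C)$ by Corollary~\ref{C30} and $\mathcal{F}_h$ is closed under injective envelopes, every $F \in \mathcal{F}_h$ embeds via $F \hookrightarrow E(F)$ into an object of $\mathcal{F}_h \cap \mathcal{F}_h^\perp = \operatorname{Prod}(C)$, hence into some $C^X$; therefore $\Hom_R(R/J, F) = 0$ for every $F \in \mathcal{F}_h$, placing $R/J$ in $\mathcal{T}_h$ as required. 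The remaining table entries are then formal: (1)$\to$(2) is immediate from Theorem~\ref{T67} and the definition of cofinite type; (2)$\to$(4), $\mathcal{C} \mapsto \spec R \setminus \ass \mathcal{C}$, uses that for a prime $\mathfrak{p}$ the domain $R/\mathfrak{p}$ is entirely $\mathcal{G}$-torsion or entirely torsion-free, so $\mathfrak{p} \in \ass \mathcal{C}$ iff $\mathfrak{p} \notin \mathcal{G}$; (3)$\to$(2) is the standard torsion-free formula; and (3)$\to$(6) follows from the lemma on cotorsion pairs generated by sets, which identifies the left class of the cotorsion pair generated by $\{R\} \cup \{\tr(R/I) \mid I \in \mathcal{G} \text{ f.g.}\}$ with direct summands of the corresponding filtered modules.
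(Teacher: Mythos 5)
Your proposal is correct, but the decisive step is handled by a genuinely different argument from the paper's. For everything except $\Psi(\Phi(\mathcal{G}))\subseteq\mathcal{G}$ you follow the same skeleton (Theorem~\ref{T67}, Corollary~\ref{C20}, the character duality, Lemmas~\ref{L233}, \ref{L06}, \ref{L05}, and $\tr(R/I)^\perp=\{I\}\Div$ from Lemma~\ref{L02}). For the hard inclusion the paper stays on the tilting side: it observes $\tr(R/J)\in{}^\perp\Phi(\mathcal{G})$, uses \cite[Corollary 6.14]{GT} together with the Hill Lemma to present it as a summand of a \emph{finitely} $\{R\}\cup\{\tr(R/I)\}$-filtered module, and applies $\ext_R^1(-,R)$ to that finite filtration to squeeze an ideal of $\mathcal{G}$ inside $J$. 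You instead dualize: $T=JT$ gives $\Hom_R(R/J,T^+)=0$; since ${}^\perp(T^+)=\bigcap_{I}\Ker\Hom_R(R/I,-)$ is of cofinite type, Proposition~\ref{P01} (closure under injective envelopes) together with $\mathcal{C}\cap\mathcal{C}^\perp=\operatorname{Prod}(T^+)$ yields $\Hom_R(R/J,F)=0$ for every $F$ in this torsion-free class, so $R/J$ is $\mathcal{G}$-torsion and $J\in\mathcal{G}$. This is valid and shorter --- essentially Lemma~\ref{L05}'s duality trick run in reverse --- and it does not even need $J$ finitely generated. What the paper's filtration argument buys, and your route must still supply separately, is exactly the finite-filtration statement quoted verbatim in the (3)$\to$(6) entry: the set-generation lemma you invoke only gives summands of transfinitely filtered modules, so you still need the Hill Lemma \cite[Theorem 7.10]{GT} to obtain the ``finitely filtered'' form for finitely presented members of ${}^\perp\mathcal{T}$. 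Two citations should also be repaired: the identification of ${}^\perp(T^+)$ with the torsion-free class of $\mathcal{G}$'s hereditary torsion pair is not given by Corollary~\ref{C20} (which does not say \emph{which} torsion pair arises) but by Corollary~\ref{C01} plus Lemma~\ref{L03}, i.e.\ by your own (3)$\to$(2) formula; and ``finite type $\leftrightarrow$ finitely generated'' is not literally the statement of Lemma~\ref{L77} --- one needs its direct-limit argument run on the cyclic modules $R/I'$, $I'\subseteq I$ finitely generated, to extract a finitely generated basis, or one can simply obtain (3)$\leftrightarrow$(5) by composing the bijections you already have.
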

\begin{proof}
		(1) $\leftrightarrow$ (2): Follows by Theorem~\ref{T67} and using the character duality (see the last paragraph of Section~2).

		(1) $\leftrightarrow$ (3): First let us prove that the prescribed maps $\Psi: \mathcal{T} \mapsto \mathcal{G}$ and $\Phi: \mathcal{G} \mapsto \mathcal{T}$ are well-defined. Let $\mathcal{T}$ be a 1-tilting class. By Lemma~\ref{L06}, whenever $J \in \Psi(\mathcal{T})$, there is a finitely generated ideal $I \subseteq J$ with $I \in \Psi(\mathcal{T})$. Also, any ideal in $\Psi(\mathcal{T})$ is faithful. Indeed, otherwise the special $\mathcal{T}$-preenvelope of $R$ would have a non-zero annihilator, which is not the case. As $\Psi(\mathcal{T})$ is evidently a filter closed under products, we infer from Lemma~\ref{L233} that it is a faithful finitely generated Gabriel topology. On the other hand, if $\mathcal{G}$ is a faithful finitely generated Gabriel topology with basis of finitely generated ideals $\mathcal{I}$, then $\Phi(\mathcal{G})=\{M \in \ModR \mid M \otimes_R R/I=0 \text{ for each $I \in \mathcal{I}$}\}$, and thus $\Phi(\mathcal{G})=\mathcal{E}^\perp$, where $\mathcal{E}=\{\tr R/I \mid I \in \mathcal{I}\}$ by Lemma~\ref{L02} (explicitly, we use the isomorphism of functors $- \otimes_R R/I \simeq \ext_R^1(\tr(R/I),-))$. This is a 1-tilting class by the same lemma.
		
		We need to prove that $\Psi$ and $\Phi$ are mutually inverse. Let $\mathcal{T}$ be a 1-tilting class. It is easy to see that $\mathcal{T} \subseteq \Phi(\Psi(\mathcal{T}))$.  Let $\mathcal{S}$ be a set of finitely presented modules of projective dimension 1 such that $\mathcal{T}=\mathcal{S}^\perp$. By Lemma~\ref{L05}, there is for each $S \in \mathcal{S}$ a (again, necessarily faithful) finitely generated ideal $I_S$ such that $S^\perp=\{M \in \ModR \mid M=I_SM\}$. Put $\mathcal{I}=\{I_S \mid S \in \mathcal{S}\}$. It follows that $\mathcal{T}=\bigcap_{S \in \mathcal{S}}S^\perp=\mathcal{I}\Div$. Then $\mathcal{I} \subseteq \Psi(\mathcal{T})$, and therefore $\Phi(\Psi(\mathcal{T})) \subseteq \mathcal{T}$, proving that $\Phi(\Psi(\mathcal{T}))=\mathcal{T}$.

		Finally, let $\mathcal{G}$ be a faithful finitely generated Gabriel topology and let $\mathcal{I}$ be some basis of $\mathcal{G}$ of finitely generated ideals. Let $J \in \Phi(\Psi(\mathcal{G}))$ be a finitely generated ideal. Denote by $(\mathcal{A},\mathcal{T})$ the tilting cotorsion pair $(^\perp\Psi(\mathcal{G}),\Psi(\mathcal{G}))$. Note that since $\mathcal{T}=\mathcal{I}\Div$, this cotorsion pair is generated (in the sense of \cite[Definition 5.15]{GT}) by the set $\mathcal{S}=\{\tr(R/I) \mid I \in \mathcal{I}\}$. Since $\mathcal{T} \subseteq \{M \in \ModR \mid M=JM\}$, we have that $\tr(R/J)\footnote{The stable equivalence representative choices do not matter in this argument.} \in \mathcal{A}$. By \cite[Corollary 6.14]{GT} and the Hill Lemma (\cite[Theorem 7.10]{GT}), we infer that $\tr(R/J)$ is a direct summand of a module $N$ possessing a finite $\mathcal{S} \cup \{R\}$-filtration. 

		Therefore, there is a filtration $0=N_0 \subseteq N_1 \subseteq \cdots \subseteq N_l=N$ with $N_{i+1}/N_i \in \mathcal{S} \cup \{R\}$ for each $i=0,1,\ldots,l-1$. Let us apply the functor $\ext_R^1(-,R)=(-)^\dagger$ to this filtration. Since all modules in $\mathcal{S} \cup \{R\}$ have projective dimension at most 1, this functor will act as a right exact functor on this filtration. As the filtration of $N$ was finite, we obtain a filtration $0=M_0 \subseteq M_1 \subseteq \cdots \subseteq M_l = N^\dagger$ such that $M_{i+1}/M_i$ is isomorphic to a homomorphic image of $X^\dagger$ for some $X \in \mathcal{S} \cup \{R\}$ for each $i=0,1,\ldots,l-1$.

		Since $\pd(\tr(R/I))=1$ for any $I \in \mathcal{I}$, we can apply Lemma~\ref{L03} in order to see that $\tr(R/I)^\dagger \steq \tr \tr(R/I)$, and that $(\tr(R/I)^\dagger)^*=0$. The only possibility is that $\tr(R/I)^\dagger \simeq R/I$. As $R^\dagger=0$, we conclude that $N^\dagger$ admits a filtration $0=M_0' \subseteq M_1' \subseteq \cdots \subseteq M'_k=N^\dagger$ such that $M'_{i+1}/M'_i \simeq R/L_i$ for an ideal $L_i$ containing some ideal $I_i \in \mathcal{I}$ for each $i=0,1,\ldots,k-1$. Put $L=L_0L_1\cdots L_{k-1}$. Then $L \subseteq \ann N^\dagger$, and as $I_0I_1\cdots I_{k-1} \subseteq L$, we have that $L \in \mathcal{G}$. But $R/J \simeq \tr(R/J)^\dagger$ is a direct summand of $N^\dagger$, whence $L \subseteq \ann N^\dagger \subseteq \ann(R/J)=J$. Therefore, $J \in \mathcal{G}$, proving that $\Phi(\Psi(\mathcal{G}))=\mathcal{G}$.

		(3) $\leftrightarrow$ (4): Let us again first prove that prescribed maps $\Xi: \mathcal{G} \mapsto X$ and $\Theta: X \mapsto \mathcal{G}$ are well-defined. Let $\mathcal{G}$ be a faithful finitely generated Gabriel topology with basis $\mathcal{I}$ of finitely generated ideals. Then $\Xi(\mathcal{G})$ is equal to $\bigcup_{I \in \mathcal{I}}V(I)$, and therefore is a Thomason set. Suppose that there is $\mathfrak{p} \in \Xi(\mathcal{G}) \cap \vass(R)$. Let $\mathcal{C}$ be the 1-cotilting class of cofinite type associated to $\Phi(\mathcal{G})$. Since $\mathfrak{p} \in \vass(R)$, and $R \in \mathcal{C}$, we have that $R/\mathfrak{p} \in \mathcal{C}$. But this is a contradiction, because $\mathcal{C}=\bigcap_{J \in \mathcal{G}}\Ker \Hom_R(R/J,-)$ by the previous bijection and Lemma~\ref{L03} and Corollary~\ref{C01}. 

		Let $X$ be a Thomason subset of $\spec(R) \setminus \vass(R)$. It is easy to see that $\Theta(X)$ is a finitely generated Gabriel topology. Suppose that there is an ideal $I \in \Theta(X)$ and a non-zero map $R/I \rightarrow R$. Then there is $\mathfrak{p} \in \vass(R)$ such that $I \subseteq \mathfrak{p}$, and therefore $\mathfrak{p} \in \Theta(X)$. But then $\mathfrak{p} \in X$, a contradiction.

		Now we prove that $\Xi$ and $\Theta$ are mutually inverse. That $\Xi(\Theta(X))=X$ is easy to see. Let us show that $\Theta(\Xi(\mathcal{G}))=\mathcal{G}$. Clearly $\mathcal{G} \subseteq \Theta(\Xi(\mathcal{G}))$. Suppose that there is $I \in \Theta(\Xi(\mathcal{G})) \setminus \mathcal{G}$. Since $\mathcal{G}$ has a basis of finitely generated ideals, by Zorn's Lemma there is a maximal ideal with this property, let $I'$ be maximal such. Then $I'$ is necessarily prime. Since $\Theta(\Xi(\mathcal{G})) \cap \spec(R)=\mathcal{G} \cap \spec(R)$, we arrived at a contradiction. 

		(3) $\rightarrow$ (2): Correctness of this bijection follows from Lemma~\ref{L03} and Corollary~\ref{C01}. Indeed, the cotilting class dual to the tilting class $\Phi(\mathcal{G})$ is equal to $\bigcap_{I \in \mathcal{G}}\Ker \Hom_R(R/I,-)$.

		(2) $\rightarrow$ (4): Using the already established bijections, and that $\mathcal{C}$ is closed under submodules and direct limits, it is enough to show that $\mathcal{C}=\{M \in \ModR \mid \vass(M) \cap \mathcal{G} = \emptyset\}$, where $\mathcal{G}$ is the finitely generated Gabriel topology such that $\mathcal{C}=\bigcap_{I \in \mathcal{G}}\Ker \Hom_R(R/I,-)$. It is easily seen that $\mathfrak{p} \not\in \vass(M)$ for any prime $\mathfrak{p} \in \mathcal{G}$ and $M \in \mathcal{C}$. To prove the converse, suppose that $\vass(M) \cap \mathcal{G} = \emptyset$. If there was a non-zero map in $\Hom_R(R/I,M)$ with $I \in \mathcal{G}$, there would exist by Lemma~\ref{L01} a prime ideal $\mathfrak{p} \in \vass(M)$ such that $I \subseteq \mathfrak{p}$ (see the proof of Proposition~\ref{P01}), a contradiction. Therefore, we can conclude that $M \in \mathcal{C}$.

		(5) $\rightarrow$ (2): Direct consequence of Corollary~\ref{C20}.

		(3) $\rightarrow$ (6): By Theorem~\ref{T67}, there is a 1-1 correspondence between 1-tilting classes and resolving subcategories of projective dimension at most 1 given by $\mathcal{T} \mapsto \mathcal{S}=({}^\perp \mathcal{T}) \cap \modR$. If $\mathcal{G}$ is a Gabriel topology with $\mathcal{G}=\Psi(\mathcal{T})$, we know from above that the cotorsion pair $({}^\perp \mathcal{T},\mathcal{T})$ is generated by the set $\{R\}\cup\{\tr(R/I) \mid I \in \mathcal{G} \text{ f.g.}\}$. Then $\mathcal{S}=({}^\perp \mathcal{T}) \cap \modR$ has the desired form, and we established the correspondence.
\end{proof}
\section{Tilting modules}
\subsection{Fuchs-Salce tilting modules}
In the previous part we have proved that $1$-tilting classes coincide with the classes of all modules divisible by all ideals of a faithful finitely generated Gabriel topology. The purpose of this section is to construct $1$-tilting modules generating those classes, and hence classify all 1-tilting modules over commutative rings up to equivalence. Of course we can always construct such modules using the Small Object Argument (see \cite{ET} or \cite[Theorem 6.11, Remark 13.47]{GT}). However, the following construction is ``minimal'' in the sense that the resulting module has a filtration of length only $\omega$ by direct sums of finitely presented modules. Also, the explicit contruction allows for direct computations, as we will demonstrate in the next second subsection.

The following construction generalizes the tilting modules generating the class of all divisible modules over a domain due to Fachini (\cite{F}), of all $S$-divisible modules for a multiplicative set $S$ due to Fuchs-Salce (\cite{FS}), and of all $\mathcal{F}$-divisible modules for a finitely generated Gabriel topology $\mathcal{F}$ over a Prüfer domain due to Salce (\cite{S}).

\begin{definition}
\label{D01}
Let $R$ be a commutative ring, and let $\mathcal{I}$ be a set of faithful finitely generated ideals of $R$. For each $I \in \mathcal{I}$ fix a finite set of generators $\{x^I_1,x^I_2,\ldots,x^I_{n_I}\}$ of $I$. Let $\Lambda$ denote the set consisting of all finite sequences of pairs of the form $(I,k)$, where $I \in \mathcal{I}$, and $k < n_I$ (including the empty sequence denoted by $w$). Let $F$ be a free $R$-module with the basis $\Lambda$. Given two sequences $\lambda, \lambda' \in \Lambda$, we denote their concatenation by $\lambda \sqcup \lambda'$. In particular, symbol $\lambda \sqcup (I,k)$ will stand for appending pair $(I,k)$ to sequence $\lambda \in \Lambda$.

			Define a submodule $G$ of $F$ as the span of all elements of the form 
			$$\lambda - \sum_{k \in n_I} x^I_k(\lambda\sqcup(I,k)),$$
			for each $\lambda \in \Lambda$ and $I \in \mathcal{I}$. Put $M_\mathcal{I}=F/G$. Let us call the module $\delta_\mathcal{I}=M_\mathcal{I} \oplus M_\mathcal{I}/\Span(w)$ the \emph{Fuchs-Salce tilting module}.

			If $\mathcal{G}$ is a faithful finitely generated Gabriel topology, we will abuse the notation by writing $\delta_\mathcal{G}$ instead of $\delta_\mathcal{I}$, where $\mathcal{I}$ is the set of all finitely generated ideals from $\mathcal{G}$.
	\end{definition}
	\begin{figure}[h]
			\includegraphics[width=360pt]{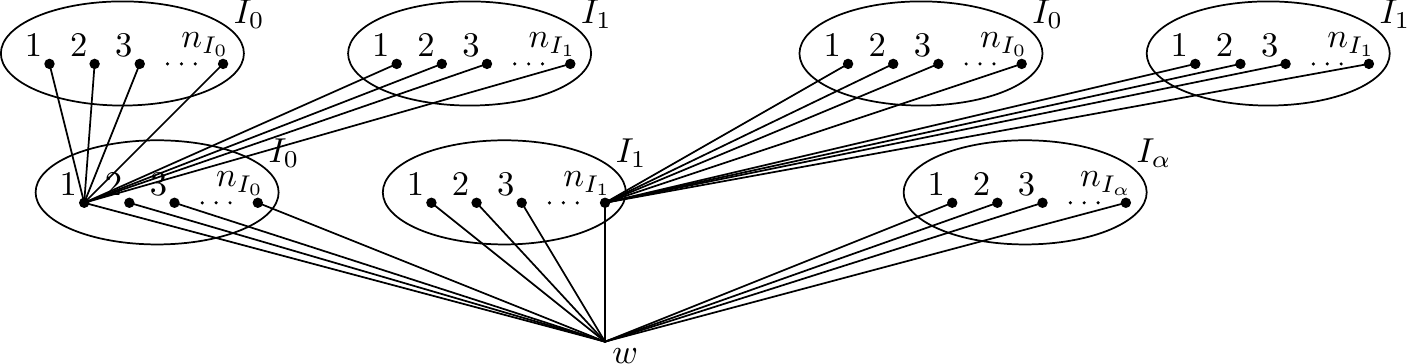}
			\caption{Construction of the Fuchs-Salce tilting module}
	Picture illustrates the first three levels of the homogeneous tree $\Lambda$ from Definition~\ref{D01}. The basis of the module $M_\mathcal{I}$ consists of all vertices of this tree. For each ``bubble'' we add one relation identifying the parent vertex with the linear combination of the vertices in the bubble with scalar coefficients being the chosen generators $x_1^I,x_2^I,\ldots,x_{n_I}^I$ of the ideal $I$.
	\end{figure}
	We fix a concrete representative in the stable equivalence class $\tr(R/I)$:
	\begin{notation}
		In the setting as in Definition~\ref{D01}, we define for each $I \in \mathcal{I}$ a module $\ctr(R/I)=R^{n_I}/(x^I_1,x^I_2,\ldots,x^I_{n_I})R$. Note that $\ctr(R/I) \steq \tr(R/I)$.
	\end{notation}
	\begin{prop}
			\label{P21}
			The module $\delta_\mathcal{I}$ defined above is a 1-tilting module generating the 1-tilting class $\mathcal{I}\Div$.
	\end{prop}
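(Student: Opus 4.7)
The plan is to produce a natural filtration of $M_\mathcal{I}$ of length $\omega$ whose successive factors are either a single copy of $R$ or a direct sum of modules of the form $\ctr(R/I)$ for $I \in \mathcal{I}$; once this is established, all three tilting axioms and the identification of the tilting class with $\mathcal{I}\Div$ follow by standard homological arguments. Let $\Lambda_{\le n}$ denote the set of sequences of length at most $n$ and let $M_n$ be the $R$-submodule of $M_\mathcal{I}$ generated by the images of the basis elements $e_\lambda$ with $\lambda \in \Lambda_{\le n}$. The key structural claim I would prove first is that $M_0 = \Span(w) \cong R$ and that for every $n \geq 0$,
$$M_{n+1}/M_n \;\cong\; \bigoplus_{\lambda \in \Lambda_{=n}} \bigoplus_{I \in \mathcal{I}} \ctr(R/I).$$
The reason is that, modulo $M_n$, each defining relation $e_\lambda = \sum_k x_k^I e_{\lambda \sqcup (I,k)}$ with $\lambda$ of length $n$ reduces to $\sum_k x_k^I e_{\lambda \sqcup (I,k)} = 0$, which is precisely the relation defining $\ctr(R/I)=R^{n_I}/(x_1^I,\ldots,x_{n_I}^I)R$.

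Given this filtration, the remaining verifications are routine. For $\pd \delta_\mathcal{I} \leq 1$: faithfulness of each $I \in \mathcal{I}$ gives an exact sequence $0 \to R \to R^{n_I} \to \ctr(R/I) \to 0$ (the first map $r \mapsto (rx_1^I,\ldots,rx_{n_I}^I)$ is injective because $\ann(I)=0$), so every quotient in the filtration has projective dimension at most $1$, and the standard fact that $\{M : \pd M \leq 1\}$ is closed under transfinite extensions propagates this bound to $M_\mathcal{I}$ and then to $M_\mathcal{I}/\Span(w)$. Divisibility is immediate from the relation $e_\lambda = \sum_k x_k^I e_{\lambda \sqcup (I,k)}$, which expresses every basis element as an element of $I M_\mathcal{I}$, so both summands of $\delta_\mathcal{I}$ lie in $\mathcal{I}\Div$. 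Using $\Span(w)\cong R$, the short exact sequence $0 \to R \to M_\mathcal{I} \to M_\mathcal{I}/\Span(w) \to 0$ then supplies the required tilting presentation of $R$, both non-trivial terms being summands of $\delta_\mathcal{I}$.

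For the second tilting axiom together with the identification $\delta_\mathcal{I}^\perp = \mathcal{I}\Div$, I check both inclusions. Given $N \in \mathcal{I}\Div$, applying $\Hom_R(-,N)$ to the presentation of $\ctr(R/I)$ above yields $\ext_R^1(\ctr(R/I),N) = N/IN = 0$, while $\ext_R^1(R,N)=0$ holds trivially. The Eklof Lemma applied to the filtration therefore gives $\ext_R^1(M_\mathcal{I},N)=0$, and analogously for $M_\mathcal{I}/\Span(w)$. Since $\delta_\mathcal{I}^{(X)}$ remains in $\mathcal{I}\Div$ (divisibility passes to direct sums), this proves $\ext_R^1(\delta_\mathcal{I},\delta_\mathcal{I}^{(X)})=0$ and the inclusion $\mathcal{I}\Div \subseteq \delta_\mathcal{I}^\perp$. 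Conversely, if $N \in \delta_\mathcal{I}^\perp$, applying $\Hom_R(-,N)$ to $0 \to \Span(w) \to M_\mathcal{I} \to M_\mathcal{I}/\Span(w) \to 0$ makes the evaluation $\Hom_R(M_\mathcal{I},N) \to \Hom_R(\Span(w),N) \cong N$ surjective; given $n \in N$, lift it to some $\phi \colon M_\mathcal{I} \to N$ with $\phi(w)=n$ and apply $\phi$ to $w = \sum_k x_k^I e_{(I,k)}$ in $M_\mathcal{I}$ to conclude $n \in IN$. Hence $N = IN$ for every $I \in \mathcal{I}$.

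The principal obstacle I anticipate is the structural claim about the filtration itself — namely the injectivity of $R \to M_\mathcal{I}$, $r \mapsto rw$, and the assertion that no unexpected identifications occur in the successive quotients. I would handle this by realising $M_\mathcal{I}$ as the directed union of truncated modules $M^{(n)}$, each presented by only the relations $r_{\lambda,I}$ with $\lambda$ of length strictly less than $n$, and reading off the quotient structure layer-by-layer directly from the generating data; faithfulness of $\mathcal{I}$ is what ensures that the copies of $\ctr(R/I)$ assemble into the graded pieces without collapse.
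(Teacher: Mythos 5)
Your proof is correct, and its core — the $\omega$-filtration $M_0=\Span(w)\cong R\subseteq M_1\subseteq\cdots$ of $M_\mathcal{I}$ with $M_{n+1}/M_n\cong\bigoplus_{\lambda\in\Lambda_n}\bigoplus_{I\in\mathcal{I}}\ctr(R/I)$, justified via truncations and the faithfulness of the ideals — is exactly the paper's key step; the paper fills in the point you flag as the ``principal obstacle'' by showing that the prescribed generators of $G$ form a free basis and that the generators indexed by long sequences stay linearly independent modulo $F_n$ (because $\ann I=0$ kills any coefficient), which yields $G\cap F_n=\Span(X_n)$; your sketch names the right mechanism but you should write this independence argument out. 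Where you genuinely diverge is the second half: the paper first observes that $\mathcal{I}\Div=(\bigoplus_I\ctr(R/I))^\perp$ is already a $1$-tilting class, shows $M_\mathcal{I}\in{}^\perp\mathcal{T}\cap\mathcal{T}$ and $M_\mathcal{I}/\Span(w)\in{}^\perp\mathcal{T}$, concludes that $R\to M_\mathcal{I}$ is a special $\mathcal{T}$-preenvelope, and then cites \cite[Remark 13.47]{GT} to get that $\delta_\mathcal{I}$ is tilting and generates $\mathcal{T}$; you instead verify the three tilting axioms by hand (projective dimension via closure of $\{\pd\leq 1\}$ under filtrations, $\ext^1_R(\ctr(R/I),N)\cong N/IN$ plus the Eklof Lemma for $\mathcal{I}\Div\subseteq\delta_\mathcal{I}^\perp$, and the lifting argument through $0\to\Span(w)\to M_\mathcal{I}\to M_\mathcal{I}/\Span(w)\to 0$ for the reverse inclusion). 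Your route is more elementary and self-contained — it does not presuppose that $\mathcal{I}\Div$ is a tilting class nor the preenvelope characterization of tilting classes — at the cost of reproving those standard facts explicitly; the paper's route is shorter because it delegates them to the cited machinery. Both are complete once the filtration claim is fully argued.
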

	\begin{proof}
			Put $\mathcal{T}=\mathcal{I}\Div$. First note that by the hypothesis that we have imposed on the ideals in $\mathcal{I}$, we see that $\ctr(R/I)$ is a finitely presented module of projective dimension 1 for each $I \in \mathcal{I}$, and whence $(\bigoplus_{I \in \mathcal{I}} \ctr(R/I))^\perp=\mathcal{T}$ is a 1-tilting class. Put $\mathcal{A}={}^\perp \mathcal{T}$. 

			Let $M_\mathcal{I}$ be the module from Definition~\ref{D01}. For each $n \in \omega$ let $M_n$ be the submodule of $M_\mathcal{I}$ generated by (the images of) all sequences in $\Lambda$ of length smaller then or equal to $n$. In particular, $M_0=\Span(w)$ is isomorphic to $R$. The quotient $M_{n+1}/M_n$ is generated by the cosets of all sequences in $\Lambda$ of length $n+1$. We claim that $M_{n+1}/M_n$ is isomorphic to a direct sum of a suitable number of copies of the modules $\ctr(R/I)$ with $I \in \mathcal{I}$. 
			
			In order to prove this claim, let us fix more notation: For each $n \in \omega$ denote by $\Lambda_n$ (resp. $\Lambda_{<n}$) a subset of $\Lambda$ consisting of all sequences of length $n$ (resp. smaller than $n$). Put $F_n=\Span(\Lambda_{<n+1}) \subseteq F$ and $G_n=F_n \cap G$, so that $M_n \simeq F_n/G_n$. Let $X=\{\lambda - \sum_{k \in n_I} x^I_k(\lambda\sqcup(I,k)) \mid \lambda \in \Lambda, I \in \mathcal{I}\}$ be the prescribed set of generators of $G$. Let $X_n = X \cap F_n$ for each $n \in \omega$. Observe that $X$ is actually a free basis of $G$, and furthermore, that $X \setminus X_n$ is linearly independent in $F$ modulo $F_n$. Indeed, our hypothesis of $\Hom_R(R/I,R)=0$ assures that $I$ has no non-trivial annihilator in $R$ for each $I \in \mathcal{I}$, and thus the elements of the form $\sum_{k \in n_I} x^I_k(\lambda\sqcup(I,k))$ are torsion-free, and hence they are linearly independent in $F/F_n$. It follows that $G_n=\Span(X_n)$, that is, $G_n$ is generated by elements
			$$\lambda - \sum_{k \in n_I} x^I_k(\lambda\sqcup(I,k)),$$
			where $\lambda \in \Lambda_{<n}, I \in \mathcal{I}$. From this it is easily seen that $M_{n+1}/M_n$ can be viewed as a module with generators $\Lambda_{n+1}$ and relations of the form 
			$$\sum_{k \in n_I} x^I_k(\lambda\sqcup(I,k))=0,$$ 
			where $\lambda \in \Lambda_n$ and $I \in \mathcal{I}$. It follows that $M_{n+1}/M_n \simeq \bigoplus_{\lambda \in \Lambda_n} \bigoplus_{I \in \mathcal{I}} \ctr(R/I)$, and the claim is proved.

			As $M_0=wR \simeq R$, we have that $M_\mathcal{I}$ is filtered by the set $\{R\} \cup \{\ctr(R/I) \mid I \in \mathcal{I} \} \subseteq \mathcal{A}$, and so $M_\mathcal{I} \in \mathcal{A}$, and also $M_\mathcal{I}/wR \in \mathcal{A}$. On the other hand, $M_\mathcal{I}$ is generated by $\Lambda$, and from the construction we have that for each $\lambda \in \Lambda$ and each $I \in \mathcal{I}$, $\lambda \in IM_\mathcal{I}$. It follows that $M_\mathcal{I} \in \mathcal{T}$.

			Altogether we have that the inclusion $R \simeq M_0 \rightarrow M_\mathcal{I}$ is a special $\mathcal{T}$-preenvelope of $R$. An argument \cite[Remark 13.47]{GT} then shows that $\delta_\mathcal{I}=M_\mathcal{I} \oplus M_\mathcal{I}/wR$ is a 1-tilting module in $\mathcal{A} \cap \mathcal{T}$, and thus generating the class $\mathcal{T}$.
	\end{proof}

	Combining Theorem~\ref{T01} and Proposition~\ref{P21} we obtain the following.

	\begin{thm}
			\label{T02}
			Let $R$ be a commutative ring. Then 
			$$\{\delta_\mathcal{G} \mid \mathcal{G} \text{ a faithful finitely generated Gabriel topology}\}$$
			is the set of representatives of equivalence classes of all 1-tilting modules over $R$.
	\end{thm}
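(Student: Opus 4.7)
The plan is to assemble Theorem~\ref{T01} and Proposition~\ref{P21}; essentially all the work has been done and what remains is to package the bijection correctly. What must be shown is that (a) every 1-tilting module $T$ is equivalent to some $\delta_\mathcal{G}$, and (b) modules $\delta_\mathcal{G}$ and $\delta_{\mathcal{G}'}$ coming from different topologies represent distinct equivalence classes.

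For (a), I would start with an arbitrary 1-tilting module $T$ and form its 1-tilting class $\mathcal{T}=T^\perp$. By Theorem~\ref{T01}, the assignment $\Psi:\mathcal{T}\mapsto \{I\mid M=IM \text{ for all }M\in\mathcal{T}\}$ is a bijection between 1-tilting classes and faithful finitely generated Gabriel topologies, with inverse $\Phi:\mathcal{G}\mapsto\mathcal{G}\Div$. Setting $\mathcal{G}=\Psi(\mathcal{T})$ we therefore have $T^\perp=\mathcal{T}=\Phi(\mathcal{G})=\mathcal{G}\Div$. On the other hand Proposition~\ref{P21} gives $\delta_\mathcal{G}^\perp=\mathcal{I}\Div$ where $\mathcal{I}$ is the set of finitely generated ideals in $\mathcal{G}$; but $\mathcal{I}\Div=\mathcal{G}\Div$, since $\mathcal{G}$ has a basis of finitely generated ideals and divisibility is preserved when passing to larger ideals in a filter. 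Hence $T^\perp=\delta_\mathcal{G}^\perp$, which is exactly the definition of equivalence of 1-tilting modules.

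For (b), suppose $\delta_\mathcal{G}^\perp=\delta_{\mathcal{G}'}^\perp$. Applying $\Psi$ to this common 1-tilting class recovers $\mathcal{G}$ from $\delta_\mathcal{G}^\perp=\mathcal{G}\Div$ (since $\Psi\circ\Phi=\operatorname{id}$ by Theorem~\ref{T01}) and likewise $\mathcal{G}'$ from the right-hand side, so $\mathcal{G}=\mathcal{G}'$. Together with (a), this shows that $\mathcal{G}\mapsto\delta_\mathcal{G}$ induces a bijection between faithful finitely generated Gabriel topologies and equivalence classes of 1-tilting modules, which is the content of the theorem.

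There is no substantial obstacle at this stage; the only point worth noting explicitly is that the construction of $\delta_\mathcal{G}$ in Definition~\ref{D01} depends on a choice of finite generating sets for the ideals of $\mathcal{G}$, but Proposition~\ref{P21} pins down its tilting class as $\mathcal{G}\Div$ independently of these choices, so the notation $\delta_\mathcal{G}$ is unambiguous at the level of equivalence classes. The only mild verification inside the argument is the equality $\mathcal{I}\Div=\mathcal{G}\Div$ above, which is immediate because every $J\in\mathcal{G}$ contains some $I\in\mathcal{I}$, and divisibility by $I$ forces divisibility by $J$.
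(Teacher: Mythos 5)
Your proposal is correct and follows exactly the route the paper takes: the theorem is obtained by combining Theorem~\ref{T01} (the bijection $\Psi$, $\Phi$ between 1-tilting classes and faithful finitely generated Gabriel topologies) with Proposition~\ref{P21} (which identifies $\delta_\mathcal{G}^\perp$ with $\mathcal{G}\Div$). Your explicit checks — that $\mathcal{I}\Div=\mathcal{G}\Div$ for a basis $\mathcal{I}$ of finitely generated ideals, and that injectivity of $\mathcal{G}\mapsto[\delta_\mathcal{G}]$ follows from $\Psi\circ\Phi=\operatorname{id}$ — are precisely the details the paper leaves implicit.
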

	\subsection{An application}
	As an application, we present an alternative proof of the positive solution of the so-called Saorín's problem for commutative rings. The Saorín's problem is the following statement.

	\begin{prob}
			\label{P00}
			\emph{(\cite{SP})}
		Let $R$ be a ring and $T$ a 1-tilting module such that the induced torsion-free class $\mathcal{F}=\Ker \Hom_R(T,-)$ is closed under direct limits. Is then $T$ equivalent to a finitely generated 1-tilting module?
	\end{prob}

	The motivation of the problem is the recent result of Parra and Saorín \cite[Theorem 4.9]{SP}, which states that heart of the t-structure associated to a tilting torsion pair $(\mathcal{T},\mathcal{F})$ is a Grothendieck category if and only if $\mathcal{F}$ is closed under direct limits.

	If $R$ is commutative, then any finitely generated tilting module is projective, so a positive answer implies that $\mathcal{F}=\{0\}$. Problem~\ref{P00} has a negative answer in general, a very involved counter-example was found by Herzog, and further counter-examples that are non-commutative, but two-sided noetherian were constructed by Příhoda (\cite{PP}). On the other hand, Problem~\ref{P00} has a positive answer whenever $R$ is commutative, as proved by Bazzoni in \cite{PP}. We can now reprove the latter result in an elementary way using our classification of 1-tilting classes and 1-tilting modules.

	\begin{thm}\emph{(\cite{PP})}
		Let $R$ be a commutative ring and $T$ a 1-tilting module such that $\mathcal{F}=\Ker \Hom_R(T,-)$ is closed under direct limits. Then $T$ is projective.
	\end{thm}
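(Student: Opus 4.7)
The plan is to invoke the classification of Theorems~\ref{T01} and~\ref{T02} to reduce the statement to an explicit argument with the Fuchs--Salce module. By Theorem~\ref{T02}, up to equivalence we may assume $T = \delta_\mathcal{G}$ for a unique faithful finitely generated Gabriel topology $\mathcal{G}$, and $T$ is projective precisely when $\mathcal{G} = \{R\}$. The argument would proceed by contradiction: assume $\mathcal{G} \neq \{R\}$, fix a basis $\mathcal{I}$ of $\mathcal{G}$ consisting of finitely generated ideals and containing a proper ideal $I$, and aim to produce a nonzero module in $\mathcal{T} \cap \mathcal{F} = 0$.

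The first structural input is that the Gabriel torsion class $\mathcal{E}$ associated to $\mathcal{G}$ satisfies $\mathcal{E} \subseteq \mathcal{F}$. Indeed, for any $J \in \mathcal{G}$ and any $N \in \mathcal{T} = \mathcal{G}\Div$, the equality $N = JN$ gives $\Hom_R(N, R/J) = \Hom_R(N/JN, R/J) = 0$, so $R/J \in \mathcal{F}$. Every finitely generated module in $\mathcal{E}$ then admits a finite filtration by such cyclic quotients (by the heredity of $\mathcal{E}$) and hence lies in $\mathcal{F}$ by extension closure; the hypothesis that $\mathcal{F}$ is closed under direct limits then upgrades this to all of $\mathcal{E}$, since every $M \in \mathcal{E}$ is a directed union of its finitely generated submodules.

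Next I would apply the explicit Fuchs--Salce construction of Definition~\ref{D01} and Proposition~\ref{P21}. The quotient $M_\mathcal{I}/wR$ is a direct summand of the tilting module $\delta_\mathcal{G}$, so it lies in $\mathcal{T}$, and because $\mathcal{I}$ contains a proper ideal $I$ with $\ctr(R/I) = R^{n_I}/(x^I_1, \dots, x^I_{n_I})R$ nonzero, the module $M_\mathcal{I}/wR$ is itself nonzero. Proposition~\ref{P21} exhibits $M_\mathcal{I}/wR$ as $\varinjlim_n(M_n/wR)$ with consecutive subquotients that are direct sums of modules $\ctr(R/I)$ for $I \in \mathcal{I}$. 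Once one knows that each such $\ctr(R/I)$ lies in $\mathcal{F}$, closure of $\mathcal{F}$ under direct sums, extensions, and direct limits forces $M_\mathcal{I}/wR \in \mathcal{F}$, producing the desired contradiction $M_\mathcal{I}/wR \in \mathcal{T} \cap \mathcal{F} = 0$.

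The main obstacle is therefore the verification that $\ctr(R/I) \in \mathcal{F}$ for every $I \in \mathcal{I}$. Given a $\mathcal{T}$-submodule $N$ of $\ctr(R/I)$, the equality $N = IN$, chased through the presentation $0 \to R \to R^{n_I} \to \ctr(R/I) \to 0$, places the preimage of $N$ in $R^{n_I}$ inside $I^k R^{n_I} + (x^I_1, \dots, x^I_{n_I})R$ for every $k \geq 0$, which over a Noetherian ring is zero by Krull's intersection theorem. In the general commutative setting one would split off the Gabriel-torsion submodule of $\ctr(R/I)$, which lies in $\mathcal{E} \subseteq \mathcal{F}$ by the first step, and reduce to checking the finitely presented Gabriel-torsion-free quotient sitting in $\mathcal{C}$. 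I expect this reduction to be where the hypothesis of direct-limit closure does its real work, most naturally through localization at the primes in the Thomason subset $\Xi(\mathcal{G}) \subseteq \spec R$ associated to $\mathcal{G}$ by Theorem~\ref{T01}.
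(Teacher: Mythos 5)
Your reduction to $T=\delta_\mathcal{G}$, the observation that it suffices to exhibit a nonzero module in $\mathcal{T}\cap\mathcal{F}$, and the outer frame (the $\omega$-filtration of $M_\mathcal{I}/\Span(w)$ with layers $\bigoplus\ctr(R/I)$, plus closure of $\mathcal{F}$ under extensions, direct sums and direct limits) are all fine and consistent with the paper. But the proof stands or falls with the claim that $\ctr(R/I)\in\mathcal{F}$ for every $I\in\mathcal{I}$, and that is a genuine gap: the claim is not a consequence of the tools you invoke, and your Noetherian justification is incorrect. Krull's intersection theorem only produces $a\in I$ with $(1-a)\bigcap_k I^kM=0$; the intersection itself need not vanish. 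Concretely, take $R=k[t]_{(t)}\times k$ and $I$ generated by $x_1=(t,1)$, $x_2=(t,0)$, so $I=tk[t]_{(t)}\times k$ is finitely generated and faithful. Then $\ctr(R/I)=R^2/(x_1,x_2)R\simeq R/(t,1)R\oplus R$ contains the nonzero submodule $0\times k$, which satisfies $I(0\times k)=0\times k$ and hence is $\mathcal{G}$-divisible; so $\ctr(R/I)\notin\mathcal{F}$ even though $R$ is Noetherian. Moreover $0\times k$ is $\mathcal{G}$-torsion-free, so passing to the Gabriel-torsion-free quotient of $\ctr(R/I)$, as you propose, does not remove the problem. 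Of course, in this example $\mathcal{F}$ is not closed under direct limits (as the theorem predicts), but that is exactly the point: your key claim can only hold by exploiting the direct-limit hypothesis, and you give no argument for how it enters; the closing sentence about localizing at the primes of $\Xi(\mathcal{G})$ is a hope, not a proof. Inside your reductio the claim is essentially as strong as the theorem itself, so the difficulty has only been relocated, not resolved.

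The paper never proves (and does not need) torsion-freeness of the layers. It uses the hypothesis quantitatively: since $\mathcal{F}$ is closed under direct limits, the torsion radical $t$ commutes with direct limits, so $M_\mathcal{I}\in\mathcal{T}$ forces $w\in t(M_n)$ for some finite $n$, and then $w\in t(N)$ for a finitely generated submodule $N$; writing $w$ via the defining relations of Definition~\ref{D01} inside the span of finitely many branches, and using that a proper faithful finitely generated ideal is never idempotent, it reaches an identity $1=t_1+\cdots+t_m$ with all $t_i$ in a fixed proper ideal, a contradiction. If you want to keep your structure, you must replace the unproved claim by an argument of comparable substance showing that $w$ cannot lie in the torsion part of any finitely generated submodule of $M_\mathcal{I}$ (which, combined with limit-closure, is what actually yields the contradiction); as it stands, the central step of your proposal is missing.
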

	\begin{proof}
			By Theorem~\ref{T01} and Theorem~\ref{T02}, there is a faithful finitely generated Gabriel topology $\mathcal{G}$ such that $\mathcal{T}=T^\perp=\mathcal{G}\Div$ and we can without loss of generality assume that $T=\delta_\mathcal{G}$. Suppose that $T$ is not projective. Then necessarily $\mathcal{G}$ contains a non-trivial ideal. Let $\mathcal{I}$ be a basis of finitely generated ideals of $\mathcal{G}$. Let $M_\mathcal{I}$ be the module from the construction of $\delta_\mathcal{G}$, that is, $\delta_\mathcal{G}=M_\mathcal{I} \oplus M_\mathcal{I}/\Span(w)$, and $M_\mathcal{I}=\bigcup_{n \in \omega}M_n$ with $M_0=\Span(w) \simeq R$ and $M_{n+1}/M_n$ isomorphic to a direct sum of copies of $\ctr(R/I), I \in \mathcal{I}$ for each $n \in \omega$.

			Denote by $t(-)$ the torsion radical of the torsion pair $(\mathcal{T},\mathcal{F})$. Since $\mathcal{F}$ is closed under direct limits, the direct limit functor is exact, and $M \in \mathcal{T}$, we have $M_\mathcal{I}=\bigcup_{n \in \omega}t(M_n)$. It follows that there is $n \in \omega$ such that $w \in t(M_n)$. In other words, there is a submodule $X$ of $M_n$ containing $w$ such that $X=IX$ for each $I \in \mathcal{I}$. It is clear that $n \neq 0$, since then it would follow that $R=t(R)$, which cannot be the case since $\mathcal{G}$ contains non-trivial ideals. 
			
			Suppose that $n>0$. From now on we adopt the notation of Definition~\ref{D01} for the generators of $M_n$. For each $n$-tuple of ideals $\bar{I}=(I_1,I_2,\ldots,I_n) \in \mathcal{I}^n$, we define a finite subset $Y_{\bar{I}}$ of $M_n$ as follows: Let $Y_{\bar{I},1}=\{(w,(I_1,k)) \mid k \in n_{I_1}\}$. For $1<j\leq n$, we put $Y_{\bar{I},j}=\{\lambda \sqcup (I_j,k) \mid \lambda \in Y_{\bar{I},j-1}, k \in n_{I_j}\}$. Finally, we set $Y_{\bar{I}}=\bigcup_{1 \leq j \leq n} Y_{\bar{I},j}$. Note that $\Span(Y_{\bar{I}})$ is a free $R$-module with basis consisting of sequences from $Y_{\bar{I}}$ of maximal length (that is, of length $n$). We can index this basis as follows: Denote by $\kappa_{\bar{I}}$ the set of all sequences $\bar{k}=(k_1,k_2,\ldots,k_n) \in \omega^n$ such that $k_j \in n_{I_k}$ for all $1 \leq j \leq n$. For each $\bar{k} \in \kappa_{\bar{I}}$ let $\lambda_{\bar{k}}^{\bar{I}}$ denote the element $w \sqcup (I_1,k_1) \sqcup (I_2,k_2) \sqcup \ldots \sqcup (I_n,k_n)$. Then $\Span(Y_{\bar{I}})$ is a free module with basis $\{\lambda_{\bar{k}}^{\bar{I}} \mid \bar{k} \in \kappa_{\bar{I}}\}$. Also, note that $w=\sum_{\bar{k} \in \kappa_{\bar{I}}} x^{I_1}_{k_1}x^{I_2}_{k_2}\cdots x^{I_n}_{k_n}\lambda_{\bar{k}}^{\bar{I}}$. Denote $x_{\bar{k}}^{\bar{I}}=x^{I_1}_{k_1}x^{I_2}_{k_2}\cdots x^{I_n}_{k_n}$ for each $\bar{I} \in \mathcal{I}^n$ and $\bar{k} \in \kappa_{\bar{I}}$. Finally, it is easy to see that $M_n=\Span(Y_{\bar{I}} \mid \bar{I} \in \mathcal{I}^n)$.
			
	As $X$ is the direct limit of all its finitely generated submodules containing $w$, we again use the hypothesis of $\mathcal{F}$ being closed under direct limits in order to find a finitely generated submodule $N$ of $X$ such that $w \in t(N)$. As $N$ is finitely generated, there are $\bar{I}^1,\bar{I}^2,\ldots,\bar{I}^m \in \mathcal{I}^m$ such that $t(N) \subseteq \Span(Y_{\bar{I}^1},Y_{\bar{I}^2},\ldots,Y_{\bar{I}^m})$. Denote this span by $S$. Then $S$ is a module with generators $\{\lambda_{\bar{k}}^{\bar{I}^j} \mid 1 \leq j \leq m, \bar{k} \in \kappa_{\bar{I}^j}\}$ subject to the following relations:
	\begin{equation}
			\label{E05}
	w=\sum_{\bar{k} \in \kappa_{\bar{I}^1}} x^{\bar{I}^1}_{\bar{k}}\lambda_{\bar{k}}^{\bar{I^1}}=\sum_{\bar{k} \in \kappa_{\bar{I}^2}} x^{\bar{I}^2}_{\bar{k}}\lambda_{\bar{k}}^{\bar{I^2}}=\cdots=\sum_{\bar{k} \in \kappa_{\bar{I}^m}} x^{\bar{I}^m}_{\bar{k}}\lambda_{\bar{k}}^{\bar{I^m}}.
	\end{equation}

	This leads to a contradiction. Indeed, since $t(N)$ is divisible by each ideal in $\mathcal{I}$, and $\mathcal{I}$ consists of finitely generated faithful (and therefore not idempotent) ideals, we infer that there is an ideal $J \in \mathcal{I}$ such that $J \subsetneq \prod_{1 \leq i \leq m, 1 \leq j \leq n}I^i_j$. Hence, there are elements $s^i_{\bar{k}} \in J$ for each $1 \leq i \leq m$ and $\bar{k} \in \kappa_{\bar{I}^i}$, such that $w=\sum_{i=1}^m \sum_{k \in \kappa_{\bar{I}^i}} s^i_{\bar{k}} \lambda_{\bar{k}}^{\bar{I}^i}$. Note that $S/\Span(w)$ decomposes as follows: $$S/\Span(w) \simeq \bigoplus_{1 \leq i \leq m}R^{(\kappa_{\bar{I}^i})}/\Span(\sum_{\bar{k}\in\kappa_{\bar{I}^i}}x_{\bar{k}}^{\bar{I}^i}\bar{k}).$$ Projecting $S$ onto the $i$-th summand in this decomposition yields that there is $t_i \in R$ such that $s^i_{\bar{k}}=t_i x^{\bar{I}^i}_{\bar{k}}$ for all $1 \leq i \leq m$ and $\bar{k} \in \kappa_{\bar{I}^i}$. Since $\Span(x^{\bar{I}^i}_{\bar{k}} \mid \bar{k} \in \kappa_{\bar{I}^i})=I_1^i I_2^i \cdots I_n^i$, we infer that $t_i \in (I_1^i I_2^i \cdots I_n^{i} : J)$. Using the relations (\ref{E05}) several times, we get that $w=(t_1+t_2+\cdots+t_m)\sum_{\bar{k} \in \kappa_{\bar{I}^1}}x_{\bar{k}}^{\bar{I}^1}\lambda_{\bar{k}}^{\bar{I}^1}$. Since $\ann(w)=0$, this implies that $t_1+t_2+\cdots+t_m=1$. But $t_i \in (I_1^i I_2^i \cdots I_n^i : J) \subseteq (\prod_{1 \leq i' \leq m, 1 \leq j \leq n}I_{j}^{i'} : J) \neq R$ for each $i=1,\ldots,m$ by the assumption on $J$, making the assertion $t_1+t_2+\cdots+t_j=1$ a contradiction.
\end{proof}

\section{Perfect localizations}
	As hereditary torsion classes coincide with localizing subcategories of $\ModR$, each hereditary torsion class $\mathcal{E}$ in $\ModR$ gives rise to a (Serre) localization $\ModR \rightarrow \ModR/\mathcal{E}$. Therefore, each 1-tilting class over a commutative ring corresponds naturally to some localization functor. The localized category is not in general a module category, and so it is not induced by a ring homomorphism. In this section, we focus on the case when this localization is induced by a flat ring epimorphism. In particular, we describe when this so-called \emph{perfect localization} allows to replace the Fuchs-Salce module by a much nicer tilting module, arisen from a ring of quotients.

	Given a Gabriel topology $\mathcal{G}$, recall that a module $M$ is $\mathcal{G}$\emph{-closed} if the inclusion $I \subseteq R$ induces an isomorphism $\Hom_R(R,M) \rightarrow \Hom_R(I,M)$ for any ideal $I \in \mathcal{G}$. Denote the full subcategory of all $\mathcal{G}$-closed modules by $\mathcal{X}(\mathcal{G})$. This subcategory is \emph{Giraud}, that is, a full subcategory of $\ModR$ such that its inclusion into $\ModR$ has a left adjoint which is exact (and, in fact, all Giraud subcategories of $\ModR$ are of form $\mathcal{X}(\mathcal{G})$ for some Gabriel topology $\mathcal{G}$). The composition of this left adjoint and the original inclusion yields a \emph{localization functor} $L: \ModR \rightarrow \ModR$. The unit of the adjunction $\eta_R: R \rightarrow L(R)$ then induces a ring structure on $Q_\mathcal{G}=L(R)$, with the unit $\eta_R$ being a ring homomorphism. For all details we refer to Chapters VII.-XI. in \cite{BS}, as for the proof of the following:
	\begin{thm}
		(\cite[XI, Proposition 3.4]{BS})
		\label{T99}
		Let $R$ be a commutative ring, and $\mathcal{G}$ a Gabriel topology. Then the following are equivalent:
		\begin{enumerate}
				\item $\eta_R: R \rightarrow Q_\mathcal{G}$ is a flat ring epimorphism, and $\{I \subseteq R \mid Q_\mathcal{G}=IQ_\mathcal{G}\}=\mathcal{G}$,
				\item $\mathcal{X}(\mathcal{G})$ is naturally equivalent to $\operatorname{Mod-Q_\mathcal{G}}$,
			\item the $R$-module $Q_\mathcal{G}$ is $\mathcal{G}$-divisible.
		\end{enumerate}
	\end{thm}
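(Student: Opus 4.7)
The plan is to prove the cyclic chain of implications $(1) \Rightarrow (2) \Rightarrow (3) \Rightarrow (1)$. Throughout I will freely use two facts: $Q_\mathcal{G}$ is automatically $\mathcal{G}$-closed, being the value of the Gabriel localization functor $L$ at $R$; and the Silver--Roby characterization stating that $\eta\colon R \to S$ is a ring epimorphism precisely when restriction of scalars $\op{Mod-}S \hookrightarrow \ModR$ is fully faithful, equivalently when the multiplication $S \otimes_R S \to S$ is an isomorphism.

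For $(1) \Rightarrow (2)$: Since $\eta_R$ is a ring epimorphism, restriction of scalars embeds $\op{Mod-Q_\mathcal{G}}$ fully faithfully into $\ModR$, with essential image $\mathcal{Y} = \{M \in \ModR \mid M \otimes_R Q_\mathcal{G} \to M \text{ is an isomorphism}\}$. It then suffices to show $\mathcal{Y} = \mathcal{X}(\mathcal{G})$. For a $Q_\mathcal{G}$-module $M$ and any $I \in \mathcal{G}$, flatness plus $IQ_\mathcal{G} = Q_\mathcal{G}$ makes the inclusion $I \hookrightarrow R$ an isomorphism after tensoring with $Q_\mathcal{G}$, so $\Hom_R(R,M) \to \Hom_R(I,M)$ is an isomorphism, witnessing $M \in \mathcal{X}(\mathcal{G})$. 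Conversely, for $M \in \mathcal{X}(\mathcal{G})$ the universal property of $L$ yields $\Hom_R(Q_\mathcal{G},M) \simeq \Hom_R(R,M) = M$, giving $M$ a canonical $Q_\mathcal{G}$-action extending its $R$-action.

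For $(2) \Rightarrow (3)$: The localization functor $L$ is exact by the Giraud condition and preserves colimits as a left adjoint; composing with the equivalence of $(2)$ we obtain a colimit-preserving exact functor $\ModR \to \op{Mod-Q_\mathcal{G}}$ sending $R$ to $Q_\mathcal{G}$. By Eilenberg--Watts, $L \simeq - \otimes_R Q_\mathcal{G}$. For any $I \in \mathcal{G}$ the module $R/I$ is torsion, hence $L(R/I) = 0$, which is exactly $Q_\mathcal{G}/IQ_\mathcal{G} = (R/I) \otimes_R Q_\mathcal{G} = 0$, yielding $\mathcal{G}$-divisibility of $Q_\mathcal{G}$.

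For $(3) \Rightarrow (1)$: This is the main obstacle, because flatness is not a purely formal consequence of divisibility. I would establish $\tor_1^R(Q_\mathcal{G},R/I) = 0$ for every ideal $I$ of $R$. For $I \in \mathcal{G}$ this follows by tensoring $0 \to I \to R \to R/I \to 0$ with $Q_\mathcal{G}$, exploiting $\mathcal{G}$-divisibility together with torsion-freeness of $Q_\mathcal{G}$ (which comes from $\mathcal{G}$-closedness). For a general $I$, one filters $R/I$ via its torsion submodule with respect to $\mathcal{G}$, reducing to the torsion case just handled and to the case where the torsion-free part embeds into a direct product of quotients $R/\mathfrak{p}$ with $\mathfrak{p} \notin \mathcal{G}$, where a direct calculation using $\mathcal{G}$-closedness of $Q_\mathcal{G}$ eliminates $\tor_1$. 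Once flatness is secured, the ring-epimorphism property amounts to $Q_\mathcal{G} \otimes_R Q_\mathcal{G} \simeq Q_\mathcal{G}$, which follows by applying $- \otimes_R Q_\mathcal{G}$ to $\eta_R$ and using that $Q_\mathcal{G} \in \mathcal{X}(\mathcal{G})$ makes the resulting arrow split by a unique map from $Q_\mathcal{G}$. Finally, the identification $\{I \subseteq R \mid Q_\mathcal{G} = IQ_\mathcal{G}\} = \mathcal{G}$ has the inclusion $\supseteq$ by hypothesis, while the reverse inclusion uses flatness: if $IQ_\mathcal{G} = Q_\mathcal{G}$ with $I \notin \mathcal{G}$, some prime $\mathfrak{p} \supseteq I$ would avoid $\mathcal{G}$, and $R/\mathfrak{p}$ would produce a non-zero $\mathcal{G}$-closed quotient contradicting $Q_\mathcal{G}/IQ_\mathcal{G} = 0$.
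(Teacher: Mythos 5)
The paper gives no argument of its own here --- the theorem is quoted verbatim from Stenström [BS, XI, Proposition 3.4] --- so your proposal has to be measured against the standard proof, and there it has one genuine gap, located exactly at the heart of the theorem: the flatness step in $(3)\Rightarrow(1)$. Tensoring $0\to I\to R\to R/I\to 0$ with $Q_\mathcal{G}$ identifies $\tor_1^R(R/I,Q_\mathcal{G})$ with $\ker(I\otimes_R Q_\mathcal{G}\to Q_\mathcal{G})$; $\mathcal{G}$-divisibility gives only the \emph{surjectivity} of $I\otimes_R Q_\mathcal{G}\to Q_\mathcal{G}$, and torsion-freeness of $Q_\mathcal{G}$ says nothing about elements of $I\otimes_R Q_\mathcal{G}$, so the assertion that the vanishing ``follows by tensoring, exploiting divisibility together with torsion-freeness'' is a non sequitur --- this is precisely the point where commutativity and the structure of $Q_\mathcal{G}$ as the $\mathcal{G}$-localization must be used. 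The missing argument is, e.g.: from $Q_\mathcal{G}=IQ_\mathcal{G}$ for $I\in\mathcal{G}$ one first gets that \emph{every} $Q_\mathcal{G}$-module is $\mathcal{G}$-divisible and $\mathcal{G}$-torsion-free; then, given $x=\sum_k a_k\otimes q_k$ in the kernel, one uses that $\operatorname{coker}\eta_R$ is $\mathcal{G}$-torsion to find $J\in\mathcal{G}$ with $q_kJ\subseteq\eta_R(R)$, computes $xj=s_j\otimes 1$ with $s_j\in\ker\eta_R=t(R)$, and kills $s_j\otimes 1$ by writing $1_{Q_\mathcal{G}}\in\ann(s_j)Q_\mathcal{G}$ (divisibility again); hence the kernel is $\mathcal{G}$-torsion, but it is also a $Q_\mathcal{G}$-module, hence torsion-free, hence zero. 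Note this works uniformly for \emph{all} ideals $I$, so your second reduction (filtering $R/I$ by its torsion submodule and embedding the torsion-free part into a product of modules $R/\mathfrak{p}$ with $\mathfrak{p}\notin\mathcal{G}$) is not only unjustified --- $\tor_1$ does not pass to submodules of products, and no such embedding is available for an arbitrary cyclic module --- it is also unnecessary.

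Two smaller points. In the final identification $\{I\mid Q_\mathcal{G}=IQ_\mathcal{G}\}\subseteq\mathcal{G}$ you choose a prime $\mathfrak{p}\supseteq I$ with $\mathfrak{p}\notin\mathcal{G}$ by an implicit Zorn argument; the complement of $\mathcal{G}$ is closed under unions of chains only when $\mathcal{G}$ is of finite type, which you do not yet know at that stage (it is a consequence of perfectness, not an input). The correct elementary argument writes $1_{Q_\mathcal{G}}=\sum a_kq_k$ with $a_k\in I$, uses torsionness of $\operatorname{coker}\eta_R$ to get $J\subseteq I+t(R)$ for some $J\in\mathcal{G}$, and then the two Gabriel axioms to conclude $I\in\mathcal{G}$. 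The implications $(1)\Rightarrow(2)$ and $(2)\Rightarrow(3)$ are essentially fine (modulo the direction of the natural map describing the essential image of restriction of scalars, and the fact that the Eilenberg--Watts detour in $(2)\Rightarrow(3)$ can be replaced by the one-line observation that $Q_\mathcal{G}/IQ_\mathcal{G}$ is simultaneously a $Q_\mathcal{G}$-module, hence $\mathcal{G}$-closed and torsion-free, and annihilated by $I\in\mathcal{G}$, hence torsion).
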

	If $\mathcal{G}$ satisfies conditions of this theorem, we call it \emph{perfect}. Say that a perfect localization $\lambda: R \rightarrow S$ is \emph{faithful}, if the map $\lambda$ is injective. Say that two ring epimorphisms $\lambda:R \rightarrow S, \lambda':R \rightarrow S'$ are \emph{equivalent} if there is a ring isomorphism $\varphi: S \rightarrow S'$, such that $\lambda'=\varphi\lambda$. The equivalence classes of ring epimorphism under this equivalence are called \emph{epiclasses} of $R$. By \cite[XI, Theorem 2.1]{BS}, the ring maps $R \rightarrow Q_\mathcal{G}$, with $\mathcal{G}$ running through perfect Gabriel topologies, parametrize all epiclasses of flat ring epimorphisms, which justifies the terminology \emph{perfect localization} instead of flat ring epimorphism.
	
	We recall that a ring is right \emph{semihereditary}, if any finitely generated right ideal is projective.
	\begin{thm}
			\emph{(cf. \cite[Proposition 7.4]{SMA})}
			Let $R$ be a commutative semihereditary ring, $\mathcal{T}$ a 1-tilting class, and $\mathcal{G}$ a Gabriel topology associated to this class (via Theorem~\ref{T01}). Then $\mathcal{G}$ is perfect, and the perfect localization $\eta: R \rightarrow Q_\mathcal{G}$ is faithful. Furthermore, there is a 1-1 correspondence between $1$-tilting classes $\mathcal{T}$ and epiclasses of faithful perfect localizations $R \xhookrightarrow{} S$; the correspondence given by
			$$\Gamma: \mathcal{T}=\mathcal{G}\Div \mapsto (R \xhookrightarrow{} Q_\mathcal{G}),$$
			$$\Delta: (R \xhookrightarrow{} S) \mapsto \{I \subseteq R \mid S=IS\}\Div.$$
	\end{thm}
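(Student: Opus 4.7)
The plan is to establish the two assertions of the theorem---perfectness of $\mathcal{G}$ and injectivity of $\eta_R$---and then to obtain the bijection by combining them with the classical Stenstr\"{o}m parametrization of epiclasses of flat ring epimorphisms by perfect Gabriel topologies \cite[XI, Theorem 2.1]{BS}. Injectivity of $\eta_R$ is essentially formal: its kernel is the $\mathcal{G}$-torsion part of $R$, and if $r \in R$ has $(0:r) \in \mathcal{G}$, then since $\mathcal{G}$ is faithful by Theorem~\ref{T01}, the annihilator of $(0:r)$ is zero, forcing $r = 0$.

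I expect perfectness to be the main obstacle; I would verify condition (3) of Theorem~\ref{T99}, namely that $Q_\mathcal{G}$ is $\mathcal{G}$-divisible. The semihereditary hypothesis enters here: any finitely generated $I \in \mathcal{G}$ is simultaneously projective (by semiheredity) and faithful (by Theorem~\ref{T01}), hence invertible in the total ring of fractions of $R$, so there exist $a_1, \ldots, a_n \in I$ and $\phi_1, \ldots, \phi_n \in \Hom_R(I,R)$ with $\sum_i a_i\phi_i = 1_I$. Composing each $\phi_i$ with $\eta_R$ and extending uniquely to $R$ via $\mathcal{G}$-closedness of $Q_\mathcal{G}$ produces elements $\tilde{b}_i \in Q_\mathcal{G}$, and the element $q := \sum_i a_i\tilde{b}_i \in IQ_\mathcal{G}$ acts as the identity on $I$ by multiplication. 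Uniqueness of the extension (applied to multiplication by $q$ versus multiplication by $1$ as maps $R \to Q_\mathcal{G}$) then forces $q = 1$, giving $IQ_\mathcal{G}=Q_\mathcal{G}$ and hence $\mathcal{G}$-divisibility of $Q_\mathcal{G}$.

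With perfectness in hand, $\Gamma$ is well defined. For $\Delta$, given a faithful perfect localization $\lambda\colon R \hookrightarrow S$, the filter $\mathcal{G}_S = \{I \subseteq R \mid S = IS\}$ is a Gabriel topology by Theorem~\ref{T99}; it is finitely generated because any witness $1 = \sum_i a_i s_i \in IS$ already yields $1 \in I'S$ for the finitely generated subideal $I' = (a_1,\ldots,a_n) \subseteq I$, and it is faithful because $rI=0$ implies $rS=r(IS)=0$, hence $r=0$ by injectivity of $\lambda$. Then $\mathcal{G}_S\Div$ is a $1$-tilting class by Theorem~\ref{T01}.

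Finally, I would verify that the two compositions are identities: $\Delta(\Gamma(\mathcal{T})) = \mathcal{G}\Div = \mathcal{T}$ by Theorem~\ref{T99}(1), which yields $\{I \mid IQ_\mathcal{G}=Q_\mathcal{G}\} = \mathcal{G}$; and $\Gamma(\Delta(\lambda)) = \lambda$ by Stenstr\"{o}m's parametrization, which identifies $S$ with $Q_{\mathcal{G}_S}$ as $R$-algebras together with their canonical maps, hence as epiclasses. The remaining work is routine bookkeeping after the perfectness step, where the identification of finitely generated faithful ideals as invertible is the decisive use of the semihereditary hypothesis.
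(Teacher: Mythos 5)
Your proposal is correct, and its overall skeleton matches the paper's: reduce everything to Theorem~\ref{T01}, Theorem~\ref{T99}, and Stenström's parametrization of epiclasses of flat ring epimorphisms by perfect Gabriel topologies \cite[XI, Theorem 2.1]{BS}. The difference is in how the intermediate facts are handled: the paper simply cites \cite{BS} for perfectness of $\mathcal{G}$ ([XI, Corollary 3.5] and [IX, Proposition 5.2]), for injectivity of $\eta_R$ ([IX, Lemma 1.2]), and for finite generation and faithfulness of $\{I \mid S=IS\}$, whereas you reprove these directly --- the dual-basis argument combined with $\mathcal{G}$-closedness of $Q_\mathcal{G}$ to get $IQ_\mathcal{G}=Q_\mathcal{G}$ for finitely generated (hence projective) $I\in\mathcal{G}$, the torsion-theoretic computation $r\in\ann(0:r)=0$ for injectivity, and the elementary witnesses $1=\sum a_is_i$ and $rS=r(IS)=0$ for the properties of $\mathcal{G}_S$. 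This buys a largely self-contained proof in which the only external input is the parametrization theorem itself; the paper's version is shorter but entirely citation-driven. One caveat: your aside that a faithful finitely generated projective ideal is ``invertible in the total ring of fractions'' is not needed and is dubious as stated (invertibility requires a regular element in $I$, which faithfulness alone does not provide); fortunately the step you actually use is just the dual basis lemma for finitely generated projectives, so nothing in the argument depends on that remark, and faithfulness enters only where it should, namely in the injectivity of $\eta_R$ and of $\lambda$.
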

	\begin{proof}
			By Theorem~\ref{T01}, the Gabriel topology $\mathcal{G}$ is necessarily finitely generated and faithful, and hence perfect by \cite[XI, Corollary 3.5]{BS} and \cite[IX, Proposition 5.2]{BS}, and any perfect Gabriel topology inducing a faithful perfect localization arises in this way. The map $\eta_R: R \rightarrow Q_\mathcal{G}$ is injective again by faithfulness of $\mathcal{G}$ and \cite[IX, Lemma 1.2]{BS}. Together with Theorem~\ref{T99}, this shows that $\Gamma$ is well-defined. By Theorem~\ref{T99} and \cite[XI, Theorem 2.1]{BS}, $\Delta(R \xhookrightarrow{} S)$ is equal to some perfect Gabriel topology $\mathcal{G}$, which is finitely generated by \cite[XI, Proposition 3.4]{BS}, and faithful by \cite[IX, Lemma 1.2]{BS}, and thus $\Delta$ is well-defined by Theorem~\ref{T01}. Finally, $\Gamma$ and $\Delta$ are mutually inverse, for checking which it is now enough to use the fact that epiclasses of faithful perfect localizations are parametrized by the set $\{R \rightarrow Q_\mathcal{G} \mid \mathcal{G} \text{ a perfect faithful Gabriel topology}\}$.
	\end{proof}

\begin{definition}
		We say that a tilting module $T$ \emph{arises from a perfect localization}, if there is an faithful perfect localization $R \xhookrightarrow{} S$ such that $S \oplus S/R$ is a 1-tilting module equivalent to $T$.
\end{definition}
We can now prove the following generalization of (part of) \cite[Theorem 4.10]{LA} and \cite[Theorem 1.1]{AHT}.

\begin{thm}
	Let $R$ be a commutative ring, $T$ a 1-tilting module, and $\mathcal{G}$ a Gabriel topology associated to $\mathcal{T}=T^\perp$ in the sense of Theorem~\ref{T01}. Then the following are equivalent:
\begin{enumerate}
	\item $\mathcal{G}$ is perfect, and $\pd_R Q_\mathcal{G} \leq 1$,
	\item $T$ arises from a perfect localization,
	\item $\gen(Q_\mathcal{G})=\mathcal{G}\Div$.
\end{enumerate}
\begin{proof}
		(1) $\Rightarrow$ (2): By \cite[Lemma 1.10]{LA}, the module $T'=Q_\mathcal{G} \oplus Q_\mathcal{G}/R$ is a 1-tilting module. Therefore, there is by Theorem~\ref{T01} a faithful finitely generated Gabriel topology $\mathcal{G}'$ such that $T'^\perp=\mathcal{G}'\Div$. Using Theorem~\ref{T99}, we conclude that $\mathcal{G}=\mathcal{G}'$, proving that $\mathcal{T}=T^\perp$, and therefore $T$ is equivalent to $T'$.

		(2) $\Rightarrow$ (3): Follows quickly from $\gen(Q_\mathcal{G})=\gen(Q_\mathcal{G}\oplus Q_\mathcal{G}/R)=\mathcal{T}=\mathcal{G}\Div$.

		(3) $\Rightarrow$ (1): That $\mathcal{G}$ is perfect follows directly from Theorem~\ref{T99}. By (3), there is an epimorphism $Q_\mathcal{G}^{(X)} \rightarrow \delta_\mathcal{G}$ for some set $X$. Since $Q_\mathcal{G} \in \mathcal{T}$, there is also an epimorphism $\delta_\mathcal{G}^{(Y)} \rightarrow Q_\mathcal{G}$ for some set $Y$. Together we get an epimorphism $Q_\mathcal{G}^{(X \times Y)} \rightarrow Q_\mathcal{G}$ in $\ModR$. As $R \rightarrow Q_\mathcal{G}$ is a ring epimorphism of $R$, $\operatorname{Mod-Q_\mathcal{G}}$ is a full subcategory of $\ModR$, and thus the epimorphism from last sentence is actually a map of $Q_\mathcal{G}$-modules, and hence it splits. But then also the epimorphism $Q_\mathcal{G}^{(X \times Y)} \rightarrow \delta_\mathcal{G}^{(Y)}$ splits, and thus $Q_\mathcal{G}$ has projective dimension at most 1 over $R$.
\end{proof}
\end{thm}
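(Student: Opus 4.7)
The plan is a cyclic implication $(1) \Rightarrow (2) \Rightarrow (3) \Rightarrow (1)$, pivoting on the Fuchs-Salce module $\delta_\mathcal{G}$, the characterization of perfect localizations in Theorem~\ref{T99}, and the classification in Theorem~\ref{T01}.

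For $(1) \Rightarrow (2)$, I would take $T' := Q_\mathcal{G} \oplus Q_\mathcal{G}/R$ as the candidate tilting module arising from the perfect localization $R \hookrightarrow Q_\mathcal{G}$ (which is injective because $\mathcal{G}$ is faithful). Under hypothesis~(1), the standard fact for flat injective ring epimorphisms of projective dimension at most $1$ (e.g.~\cite[Lemma~1.10]{LA}) shows that $T'$ is a 1-tilting module. To conclude $T \sim T'$, I would verify that the faithful finitely generated Gabriel topology $\mathcal{G}'$ associated to $T'^\perp$ via Theorem~\ref{T01} coincides with $\mathcal{G}$: both inclusions follow from Theorem~\ref{T99}(1), which identifies $\mathcal{G}$ with $\{I \mid Q_\mathcal{G} = IQ_\mathcal{G}\}$, together with the fact that $T'^\perp = \gen(T')$, so $\mathcal{G}$-divisibility of $Q_\mathcal{G}$ propagates to the whole tilting class.

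The implication $(2) \Rightarrow (3)$ is essentially formal: if $T$ is equivalent to $S \oplus S/R$ for a faithful perfect localization $R \hookrightarrow S$, then Theorem~\ref{T99}(1) identifies $S$ with $Q_\mathcal{G}$ for the Gabriel topology associated to $T^\perp$, and since $S/R$ is a quotient of $S$ one has $\gen(S \oplus S/R) = \gen(S) = \gen(Q_\mathcal{G})$; this equals $T^\perp = \mathcal{G}\Div$ by the usual $\gen(T) = T^\perp$ characterization of tilting classes.

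The real content sits in $(3) \Rightarrow (1)$. Perfectness of $\mathcal{G}$ is immediate, as $Q_\mathcal{G} \in \gen(Q_\mathcal{G}) = \mathcal{G}\Div$ means $Q_\mathcal{G}$ is $\mathcal{G}$-divisible, which is exactly condition~(3) of Theorem~\ref{T99}. The main obstacle I expect is the projective dimension bound $\pd_R Q_\mathcal{G} \leq 1$. My plan is to use the Fuchs-Salce module $\delta_\mathcal{G}$ from Proposition~\ref{P21}, which is a 1-tilting module generating $\mathcal{G}\Div$ and hence satisfies $\pd_R \delta_\mathcal{G} \leq 1$. From the equalities $\gen(Q_\mathcal{G}) = \mathcal{G}\Div = \gen(\delta_\mathcal{G})$ I obtain epimorphisms $Q_\mathcal{G}^{(X)} \twoheadrightarrow \delta_\mathcal{G}$ and $\delta_\mathcal{G}^{(Y)} \twoheadrightarrow Q_\mathcal{G}$, and composing gives an epimorphism $Q_\mathcal{G}^{(X\times Y)} \twoheadrightarrow Q_\mathcal{G}$ in $\ModR$. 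The crucial observation is that because $R \to Q_\mathcal{G}$ is a ring epimorphism, $\operatorname{Mod-Q_\mathcal{G}}$ sits as a full subcategory of $\ModR$, so this composition is actually a morphism of $Q_\mathcal{G}$-modules and splits by projectivity of $Q_\mathcal{G}$ over itself. Composing the resulting section with $Q_\mathcal{G}^{(X\times Y)} \twoheadrightarrow \delta_\mathcal{G}^{(Y)}$ yields a section of $\delta_\mathcal{G}^{(Y)} \twoheadrightarrow Q_\mathcal{G}$, exhibiting $Q_\mathcal{G}$ as a direct summand of $\delta_\mathcal{G}^{(Y)}$. Hence $\pd_R Q_\mathcal{G} \leq \pd_R \delta_\mathcal{G} \leq 1$, closing the cycle.
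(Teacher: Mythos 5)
Your proposal is correct and follows essentially the same route as the paper's own proof: the same reduction of (1)$\Rightarrow$(2) via $Q_\mathcal{G}\oplus Q_\mathcal{G}/R$ and Theorems~\ref{T01} and~\ref{T99}, the same formal argument for (2)$\Rightarrow$(3), and the same splitting trick with $\delta_\mathcal{G}$ and the fullness of $\operatorname{Mod-}Q_\mathcal{G}$ in $\ModR$ for (3)$\Rightarrow$(1). If anything, your phrasing of the last step (composing the section with $Q_\mathcal{G}^{(X\times Y)}\twoheadrightarrow\delta_\mathcal{G}^{(Y)}$ to split $\delta_\mathcal{G}^{(Y)}\twoheadrightarrow Q_\mathcal{G}$, so that $Q_\mathcal{G}$ is a summand of $\delta_\mathcal{G}^{(Y)}$) states the intended conclusion more precisely than the paper's wording.
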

\section*{Acknowledgement}
I am grateful to Jan Trlifaj for many fruitful discussions, as well as for careful reading of my manuscripts. I would like to thank Jan Šťovíček for our discussions, and for introducing me to the work of Thomason.

	\bibliographystyle{alpha}
	\bibliography{tilt_comm}

\end{document}